\theoremstyle{plain}
\newtheorem{thrm}{Theorem}[section]
\newtheorem{lemma}[thrm]{Lemma}
\newtheorem{cor}[thrm]{Corollary}
\newtheorem{rmrk}[thrm]{Remark}
\newtheorem{dfn}[thrm]{Definition}
\newtheorem{prob}[thrm]{Problem}
\begin{document}

\newcommand{\SL}{\mathcal L^{1,p}( D)}
\newcommand{\Lp}{L^p( Dega)}
\newcommand{\CO}{C^\infty_0( \Omega)}
\newcommand{\Rn}{\mathbb R^n}
\newcommand{\Rm}{\mathbb R^m}
\newcommand{\R}{\mathbb R}
\newcommand{\Om}{\Omega}
\newcommand{\Hn}{\mathbb H^n}
\newcommand{\aB}{\alpha B}
\newcommand{\eps}{\ve}
\newcommand{\BVX}{BV_X(\Omega)}
\newcommand{\p}{\partial}
\newcommand{\IO}{\int_\Omega}
\newcommand{\bG}{\boldsymbol{G}}
\newcommand{\bg}{\mathfrak g}
\newcommand{\bz}{\mathfrak z}
\newcommand{\bv}{\mathfrak v}
\newcommand{\Bux}{\mbox{Box}}
\newcommand{\e}{\ve}
\newcommand{\X}{\mathcal X}
\newcommand{\Y}{\mathcal Y}
\newcommand{\W}{\mathcal W}

\numberwithin{equation}{section}

\newcommand{\RN} {\mathbb{R}^N}
\newcommand{\Sob}{S^{1,p}(\Omega)}
\newcommand{\Dxk}{\frac{\partial}{\partial x_k}}
\newcommand{\Co}{C^\infty_0(\Omega)}
\newcommand{\Je}{J_\ve}
\newcommand{\beq}{\begin{equation}}
\newcommand{\bea}[1]{\begin{array}{#1} }
\newcommand{\eeq}{ \end{equation}}
\newcommand{\ea}{ \end{array}}
\newcommand{\eh}{\ve h}
\newcommand{\Dxi}{\frac{\partial}{\partial x_{i}}}
\newcommand{\Dyi}{\frac{\partial}{\partial y_{i}}}
\newcommand{\Dt}{\frac{\partial}{\partial t}}
\newcommand{\aBa}{(\alpha+1)B}
\newcommand{\GF}{\psi^{1+\frac{1}{2\alpha}}}
\newcommand{\GS}{\psi^{\frac12}}
\newcommand{\HFF}{\frac{\psi}{\rho}}
\newcommand{\HSS}{\frac{\psi}{\rho}}
\newcommand{\HFS}{\rho\psi^{\frac12-\frac{1}{2\alpha}}}
\newcommand{\HSF}{\frac{\psi^{\frac32+\frac{1}{2\alpha}}}{\rho}}
\newcommand{\AF}{\rho}
\newcommand{\AR}{\rho{\psi}^{\frac{1}{2}+\frac{1}{2\alpha}}}
\newcommand{\PF}{\alpha\frac{\psi}{|x|}}
\newcommand{\PS}{\alpha\frac{\psi}{\rho}}
\newcommand{\ds}{\displaystyle}
\newcommand{\Zt}{{\mathcal Z}^{t}}
\newcommand{\XPSI}{2\alpha\psi \begin{pmatrix} \frac{x}{|x|^2}\\ 0 \end{pmatrix} - 2\alpha\frac{{\psi}^2}{\rho^2}\begin{pmatrix} x \\ (\alpha +1)|x|^{-\alpha}y \end{pmatrix}}
\newcommand{\Z}{ \begin{pmatrix} x \\ (\alpha + 1)|x|^{-\alpha}y \end{pmatrix} }
\newcommand{\ZZ}{ \begin{pmatrix} xx^{t} & (\alpha + 1)|x|^{-\alpha}x y^{t}\\
     (\alpha + 1)|x|^{-\alpha}x^{t} y &   (\alpha + 1)^2  |x|^{-2\alpha}yy^{t}\end{pmatrix}}
\newcommand{\norm}[1]{\lVert#1 \rVert}
\newcommand{\ve}{\varepsilon}

\title[A parabolic analogue of the higher-order comparison theorem, etc.]{A parabolic analogue of the higher-order comparison theorem of De Silva and Savin}

\author{Agnid Banerjee}
\address{Department of Mathematics\\University of California, Irvine\\
CA 92697} \email[Agnid Banerjee]{agnidban@gmail.com}
\thanks{First author  supported in part by a post-doctoral grant of the Institute Mittag-Leffler's and by the second author's NSF Grant DMS-1001317}

\author{Nicola Garofalo}
\address{Dipartimento di Ingegneria Civile, Edile e Ambientale (DICEA) \\ Universit\`a di Padova\\ 35131 Padova, ITALY}
\email[Nicola Garofalo]{rembdrandt54@gmail.com}
\thanks{Second author supported in part by NSF Grant DMS-1001317 and by a grant of the University of Padova, ``Progetti d'Ateneo 2013"}

%
%
%
\keywords{}
\subjclass{}

\maketitle

\begin{abstract}
We show that the quotient of two caloric functions which vanish  on a portion of the  lateral  boundary of  a $H^{k+ \alpha}$ domain is $H^{k+ \alpha}$ up to the boundary  for $k \geq 2$. In the case $k=1$, we show that the quotient is  in $H^{1+\alpha}$ if   the domain is assumed  to be space-time $C^{1, \alpha}$  regular. This can be thought of as a parabolic analogue of a recent important result in \cite{DS1}, and we closely follow the ideas in that paper. We also give  counterexamples  to the fact that  analogous results are not true at points on the parabolic  boundary which are not on the lateral boundary, i.e., points which are at the corner and base of the parabolic boundary.

\end{abstract}

\section{Introduction}
The classical comparison theorem states that two nonnegative harmonic functions which vanish on the boundary of a Lipschitz, or more in general a NTA domain, must vanish at the same rate. An important consequence of this result is that the quotient of two such functions is, in fact, H\"older continuous up to the boundary (only the function in the denominator needs now to be nonnegative now). In some recent remarkable works De Silva and Savin have established a higher-order version of such result. Specifically, they have proved in \cite{DS1} the following.

\begin{thrm}\label{t}
Let $D$ be a $C^{k, \alpha}$ domain in $\Rn$, with $0 \in \partial D$. Let $u, v$ be   two harmonic functions vanishing on $\partial D \cap B(0,1)$.  Furthermore,  $u>0$ in $D$ and $u=1$ at some interior point in $D$. Then,
\begin{equation}\label{e:1}
\left\|\frac{v}{u}\right\|_{C^{k, \alpha}(B(0, 1/2))}  \leq C ||v||_{L^{\infty}(B(0,1))}.
\end{equation}
\end{thrm}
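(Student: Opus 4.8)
The plan is to localize, flatten the boundary, and reduce \eqref{e:1} to a boundary regularity statement for the degenerate divergence-form equation satisfied by the quotient $w=v/u$, the heart of the matter being an improvement-of-approximation scheme driven by the rigidity of the model solutions, following \cite{DS1}. After translating I place the relevant boundary point at $0$ and compose with a $C^{k,\alpha}$ diffeomorphism $\Psi$ that straightens $\partial D$, so that $D$ becomes locally the half-ball $B_1^+=\{x_n>0\}\cap B(0,1)$ and $u,v$ (names kept) solve a uniformly elliptic divergence-form equation $\operatorname{div}(A\nabla u)=\operatorname{div}(A\nabla v)=0$ in $B_1^+$ with $A\in C^{k-1,\alpha}$, $A(0)=\mathrm{Id}$, both vanishing on $\{x_n=0\}$. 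After dividing by $\|v\|_{L^\infty(B_1)}$ I may assume $\|v\|_{L^\infty(B_1)}=1$. The classical comparison theorem recalled in the introduction gives that $w:=v/u$ extends continuously up to $\{x_n=0\}$ with $\|w\|_{L^\infty(B_{3/4}^+)}\le C$, while the Hopf lemma and boundary Schauder estimates give $u=x_n\phi$, $v=x_n\psi$ with $\phi,\psi\in C^{k-1,\alpha}$ and $0<c\le\phi\le C$ near $0$. Hence $w=\psi/\phi\in C^{k-1,\alpha}$ for free, and the whole issue is to upgrade this by one derivative — an upgrade possible only because $u$ and $v$ solve the \emph{same} homogeneous equation on the \emph{same} domain.

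A direct computation using the symmetry of $A$ and $\operatorname{div}(A\nabla u)=0$ shows $\operatorname{div}(u^2A\nabla w)=0$ in $B_1^+$; writing $\widetilde A:=\phi^2A\in C^{k-1,\alpha}$ (still uniformly elliptic) this becomes
\[
\operatorname{div}\!\big(x_n^{2}\,\widetilde A\,\nabla w\big)=0\qquad\text{in }B_1^+ .
\]
The model operator $\operatorname{div}(x_n^{2}\nabla\,\cdot\,)$ equals, after division by $x_n^2$, the operator $\Delta'+\partial_{nn}+\tfrac2{x_n}\partial_n$, which is exactly the Laplacian of $\mathbb R^{n+2}$ acting on functions depending on $x_n$ only through the radial variable of three extra coordinates. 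Consequently every solution of the model equation of at most polynomial growth is a polynomial that is even in $x_n$ and determined by its trace $q(x')$ through a finite expansion $q(x')+x_n^2q_1(x')+\dots$ (the singular branch $\sim x_n^{-1}$ being excluded by the growth control); those of degree $\le m$ form a finite-dimensional space $\mathcal P_m$. This rigidity is the engine of the argument.

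I would then prove by induction on $m=0,1,\dots,k$ that there is $P_m\in\mathcal P_m$, with coefficients bounded by $C\|v\|_{L^\infty(B_1)}$, such that $\|w-P_m\|_{L^\infty(B_r^+)}\le C\,r^{m+\alpha}$ for all small $r$. The inductive step is the usual compactness-and-contradiction argument: were it to fail, a rescaling of $w-P_{m-1}$ along a dyadic sequence of scales would converge to a nonzero global solution of the model equation of polynomial growth of degree $\le m$, hence a member of $\mathcal P_m$; but by the optimal choice of the approximating polynomials the limit is orthogonal to $\mathcal P_m$, forcing it to vanish — a contradiction. Here $\widetilde A\in C^{k-1,\alpha}$ is exactly what keeps the perturbation errors of size $o(\text{scale}^{m+\alpha})$ through order $k$. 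Summing over dyadic scales yields $w\in C^{k,\alpha}(\overline{B_{1/2}^+})$ with norm $\le C$, and undoing the $C^{k,\alpha}$ change of variables and the normalization gives \eqref{e:1}.

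The delicate point is precisely this last step, together with the degenerate Schauder theory it implicitly requires for $\operatorname{div}(x_n^{2}\widetilde A\nabla\,\cdot\,)$ up to $\{x_n=0\}$: the weight $x_n^2$ is \emph{not} a Muckenhoupt $A_2$ weight, so the Fabes--Kenig--Serapioni machinery is unavailable off the shelf, and one must instead exploit the $\mathbb R^{n+2}$ reinterpretation and the evenness of $w$ to set up the regularity theory of the model equation, identify the spaces $\mathcal P_m$ correctly, and verify that the contradiction step closes at every order up to $k$. This is the core mechanism of \cite{DS1}, and adapting it is what drives the parabolic version here, where $w$ solves the analogous degenerate equation $\operatorname{div}(x_n^{2}\widetilde A\nabla w)-x_n^{2}\partial_t w=0$ near the lateral boundary.
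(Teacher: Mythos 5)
This is Theorem~1.1 of De Silva and Savin \cite{DS1}; the present paper quotes it without re-proving it, but its introduction, and its parabolic proofs of Theorems \ref{main1} and \ref{t10}, make explicit which mechanism it has in mind: after flattening and normalizing so that $u\approx x_n$, one approximates $v$ (not $v/u$) by $uP$ with $P$ an ``approximating polynomial'' chosen so that $x_nP$ is harmonic, runs a compactness-and-iteration scheme on $\tilde v=(v-uP)(r\,\cdot)/r^{k+1+\alpha}$ using only \emph{non-degenerate} interior and boundary Schauder estimates, approximates the blow-up limit $v_0$ by $x_nQ_0$, and then swaps $x_nQ_0$ for $uQ_0$. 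You instead pass immediately to the quotient $w=v/u$, observe that it solves the degenerate equation $\operatorname{div}(u^2A\nabla w)=\operatorname{div}(x_n^2\tilde A\nabla w)=0$, and identify the model operator with the Laplacian on $\mathbb R^{n+2}$ acting on functions radial in three auxiliary variables. Your algebraic reductions (the identity for $\operatorname{div}(u^2A\nabla w)$, the factorization $u=x_n\phi$ with $\phi\in C^{k-1,\alpha}$ bounded away from $0$, the structure of polynomial solutions of the model operator being even in $x_n$ and determined by their trace) are all correct, and the two routes aim at the same pointwise expansion of $w$ at boundary points. But the object fed into the compactness argument is different --- $\tilde v$ versus $\tilde w$ --- and that is decisive for which technical machinery is actually needed.

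The gap you flag yourself is the real one, and it is precisely what the $v\approx uP$ device in \cite{DS1}, and in Lemmas \ref{imp1} and \ref{imp2} of the present paper, is designed to avoid. Your compactness-and-contradiction step at each order $m\le k$ requires uniform equicontinuity up to $\{x_n=0\}$ for a rescaled sequence $W_j$ solving $\operatorname{div}(x_n^2\tilde A_j\nabla W_j)=f_j$ with $\tilde A_j\to I$, i.e.\ boundary regularity for the \emph{perturbed} degenerate operator, not only the model one. As you note, $x_n^2\notin A_2$, so the Fabes--Kenig--Serapioni theory is unavailable; and the $\mathbb R^{n+2}$ lift does not rescue the variable-coefficient case, because the lifted coefficient $\tilde A(x',|y|)$ is generically only Lipschitz across $\{y=0\}$ (the $|y|$-dependence destroys differentiability there unless the odd normal derivatives of $\tilde A$ vanish), so interior Schauder in $\mathbb R^{n+2}$ stalls after first order. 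Substituting the a priori $C^{k-1,\alpha}$ control coming from $w=\psi/\phi$ does not automatically yield a $j$-uniform modulus for $W_j$ after the $m$-th rescaling, since the approximating elements $P_{m-1,j}$ are model-solutions, not Taylor polynomials of $w_j$. The paper's route never runs compactness on a solution of the weighted equation, which is why it needs only the standard up-to-the-boundary Schauder theory. Finally, the closing sentence of your proposal attributes the degenerate-equation route both to \cite{DS1} and to the parabolic argument in this paper; the paper's own introduction characterizes the key idea of \cite{DS1} as the replacement of $x_nP$ by $uP$, and neither of its parabolic lemmas touches the weighted equation, so that attribution is at odds with the text you are proving this theorem for.
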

The classical  Schauder estimates  imply that $u, v$ are $C^{k, \alpha}$ up to the boundary. Then, by the Hopf Lemma we have $u_{\nu} > 0$, and from this one can assert that the quotient $\frac{v}{u}$ is $C^{k-1,\alpha}$ up to the boundary. However, Theorem \ref{t} remarkably states that the ratio is in fact $C^{k, \alpha}$ up to the boundary. The case  $k=0$ of this result is the \emph{boundary Harnack principle} mentioned in the opening, see  \cite{CFMS} and \cite{JK}.

The purpose of this note is to generalize Theorem \ref{t} above to the heat equation and, more  generally, to linear parabolic  equations with variable coefficients. The main results are Theorem \ref{t10} and Theorem \ref{t4} below. Although our work has been strongly motivated by that of De Silva and Savin, it has nonetheless required some delicate adaptations to the parabolic setting. 

It is worth mentioning here that, besides  being an interesting regularity result in its own right, a direct application of  Theorem \ref{t} above implies $C^\infty$ smoothness of a priori $C^{1,\alpha}$ free boundaries without  the use of the hodograph transformation as in \cite{KN}, \cite{KNS}, a tool that so far has been the standard way of establishing smoothness of free boundaries  starting from $C^{1, \alpha}$. For this aspect one should see Corollary 1.2 in \cite{DS1}. Having said this, we would like to mention that the hodograph transformation in \cite{KN}, \cite{KNS} does in fact imply real-analyticity of the free boundary, which is instead not implied by Theorem \ref{t}. Nevertheless, Theorem \ref{t} provides a new perspective in the study of  Schauder theory and  free boundary  problems. Theorem \ref{t}  has also been extended to slit domains in \cite{DS2}. In the same paper such result has been used to establish smoothness of the free  boundary  in lower-dimensional obstacle problems of Signorini type near regular points. The real analyticity of the free  boundary near regular  points  in the elliptic thin obstacle  problem   has been recently  established in \cite{KPS} by using a method  based on hodograph transformation.

These recent results and  their  applications to free boundary problems motivated us to investigate their parabolic counterpart. Our main result is Theorem \ref{t10} below which constitutes the heat equation counterpart of Theorem \ref{t}. We mention that the case $k=0$ of Theorem \ref{t10} can be found in \cite{FSY}. In the present  paper we make the observation that  the ideas  in \cite{DS1} can be successfully adapted to the parabolic situation. The idea of the proof in \cite{DS1} is to approximate (after a suitable change of coordinates) $v$ by  polynomials of the type $x_n P$ by a compactness argument which uses  Schauder estimates. Then, finish by remarkable idea that $x_{n}P$ can in fact be replaced by $uP$. In our situation, as in Theorems \ref{t10} and \ref{t4}, we show that the approximating  polynomials in the space  variable in \cite{DS1} can be  suitably replaced by  appropriate  approximating  parabolic polynomials, and  one can argue in a similar manner. Modulo some delicate details, which we have tried to illustrate as much as possible. Similarly to Corollary 1.2 in \cite{DS1}, Theorem \ref{t10} implies, in particular, the $C^\infty$ smoothness in the parabolic  obstacle problem of a $C^{1,\alpha}$ free boundary near the regular points considered in Theorem 13.1 in \cite{CPS}. We note nonetheless that, analogously to the elliptic case, one can establish the space-like real analyticity of the free boundary by employing the hodograph transform as in \cite{CPS}.  It remains to be seen whether the analogue of Theorem 1.1 in \cite{DS2} is true for parabolic equations since such result would have important applications to the parabolic thin obstacle problem which was systematically studied in \cite{DGPT}. This question will be addressed in a  future  study.

This paper is organized as follows. To better demonstrate the ideas we first establish in Section \ref{r1} the  higher-regularity result in the case $k=1$ and for the heat equation. In Section \ref{s2}, still for the heat equation, we analyze the case $k\ge 2$. In Section \ref{S:vc} we extend the results of the previous sections to non-divergence form operators with variable coefficients. Section \ref{S:pob} closes the paper. In it we present an application of Theorems \ref{main1} and \ref{t10} to the parabolic  obstacle  problem studied in \cite{CPS}.

\medskip

\noindent \textbf{Acknowledgment:} The paper was finalized  during the first author's stay at the Institut Mittag-Leffler during the semester long program \emph{Homogenization and Random Phenomenon}. The first  author would like to thank the Institute and the organizers of the program for the kind hospitality and the  excellent working conditions.

\section{$H^{1+\alpha}$ regularity for the heat equation}\label{r1}
In this section we establish the case $k=1$ of the main higher regularity result which is Theorem \ref{t10} below. Our main result is Theorem \ref{main1}. In order to state it we need to introduce some preliminary notation and hypothesis.

With $x\in \Rn, t\in \R$, we will denote  by $(x,t)$ a generic point in $\R^{n+1}$. If $x_0\in \Rn$ and $t_0\in \R$ we indicate with
\begin{equation}\label{Qr}
Q_{r}(x_0, t_0)= B_{r}(x_0) \times (t_0-r^2, t_0]
\end{equation}
the parabolic cylinder ``centered" at $(x_0,t_0)$.  Given an open set $G\subset \R^{n+1}$ we say that a point $(x_0,t_0)\in \p G$ belongs to the parabolic boundary of $G$, and write $(x_0,t_0)\in \p_p G$, if for every $r>0$ the open  cylinder $B_{r}(x_0) \times (t_0-r^2, t_0)$ contains points of the complement of $G$ (notice that $(x_0,t_0)\not\in B_{r}(x_0) \times (t_0-r^2, t_0)$). Thus, for instance, when $G = \Om \times (0,T)$, then $\p_p G = (\Om \times \{0\})\bigcup (\p \Om \times [0,T])$. We denote by $SG$ the lateral boundary of $G$, see p. 13 of \cite{Li} for the relevant notions. The reader should also see p. 5 of \cite{Li} for the notion of parabolic norms and distance.
For the for the definitions of $C^{k, \alpha}$ spaces, norm and  seminorm, we refer the reader to p. 90 in \cite{GT}. We also refer to p. 46 in \cite{Li} for the relevant  notion of $H^{k+\alpha}$ spaces and p. 75 in \cite{Li} as well for the definition of domains with $H^{k+\alpha}$  boundaries.

We now consider a connected bounded open set $G\subset \{(x,t)\in \R^{n+1}\mid t \leq 1\}$, and we assume that $(0,0) \in \partial_{p} G$. We also suppose that $\partial G \cap Q_{2}(0,0)= SG \cap Q_{2}(0,0)$, and that $G \cap Q_{2}(0,0)$ is space-time $C^{1, \alpha}$ regular, i.e., there exists $f \in C^{1,\alpha}$  such that 
\[
G \cap Q_{2}(0,0) = \{(x, t)\in Q_{2}(0,0)\mid x_n > f(x',t)\}.
\]

If we  introduce  the following notations:
\[
F_{t}=\{x\in \Rn \mid  (x,t) \in G \cap Q_{2}(0,0) \},\ \ \ 
G_{t}=\{x\in \Rn \mid  (x,t) \in \p G \cap Q_{2}(0,0)\},
\]
then for each $t \leq 0$, the set $G_{t}$ is a $(n-1)$-dimensional $C^{1,\alpha}$ submanifold  which can  be equivalently  characterized in the following manner
\[
G_t=\{ (x',x_n) \mid x_n= f(x', t)\ \text{and}\ (x, t) \in Q_{2}(0,0)\}.
\]
For each $x \in G_t$ we  denote  by $\nu_{t}(x)$ the inward  unit normal to $F_t$ at $x$. In the following discussion, whenever  there i son ambiguity about the point $(x,t)$ that is being considered, we will simply write $\nu$ instead of $\nu_{t}(x)$.

We will use the notation $D' f$ when referring to the gradient of $f$ with respect to the variable $x' = (x_1, ...,x_{n-1})\in \R^{n-1}$. Without loss of generality, we may and will assume that
 \begin{equation}\label{as}
\nu= \nu_0(0) = e_n,\ \ f(0,0)=0,\  D'f(0,0)=0,\  \text{and}\  ||f||_{C^{1,\alpha}} \leq c_0,
\end{equation}
where $c_0$ is a dimensional constant which is chosen sufficiently small in such a way that $(e_n, -3/2) \in G$. This latter property can always be achieved by suitably scaling the domain as we describe later. We also normalize the function $u$ appearing in Theorem \ref{main1} below so that the following holds
\begin{equation}\label{assump}
u(e_n,- 3/2) = 1.
\end{equation}

\begin{rmrk}\label{R:universal}
Hereafter in this paper when we say that a constant is \emph{universal} we mean that it depends only on the dimension $n$, and the parameters $\alpha$ and $k$ in Theorem \ref{t10} below. We notice that in the next Theorem \ref{main1} we are taking $k =1$, and thus in this case the dependence would be only on $n$ and $\alpha$.
\end{rmrk}

The main result of this section is the following $H^{1+ \alpha}$ regularity result.

\begin{thrm}\label{main1}
Let  the domain $G\subset \R^{n+1}$ be as above and satisfy the assumption \eqref{as}. Let $u$ and $v$ be two solutions to the heat  equation in $G \cap  Q_{2}(0, 0)$, with $u, v$  vanishing  on $\partial_{p} G \cap Q_{2}(0,0)$, and suppose that $u>0$ in $G \cap Q_{2}(0,0)$ and that it satisfy the normalization \eqref{assump}. Then, for some universal $C>0$ one has
\begin{equation}\label{descon}
||\frac{v}{u}||_{H^{1+\alpha}(G \cap Q_1(0,0))} \leq C \big(||v||_{L^{\infty} (G\cap Q_{2}(0,0))}+1\big).
\end{equation}
\end{thrm}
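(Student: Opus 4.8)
\section*{Proof proposal for Theorem \ref{main1}}

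The plan is to adapt the De Silva--Savin strategy from \cite{DS1} to the parabolic setting, working in the geometry normalized by \eqref{as}. The heart of the argument is an \emph{approximation-and-iteration} scheme at the boundary point $(0,0)$: I would show that on each parabolic cylinder $Q_r(0,0)$, for $r$ dyadic, the function $v$ can be approximated, modulo an error of order $r^{1+\alpha}$ in an appropriate scale-invariant norm, by a product of the form $u \cdot p$, where $p$ is a first-order parabolic polynomial (that is, $p(x,t) = a + b\cdot x + c\, t$, with the caloric normalization absorbed into $u$). Equivalently, writing $w = v/u$, one wants to produce linear (in space, affine in $t$ in the parabolic sense) approximations of $w$ at $(0,0)$ whose coefficients converge geometrically; this gives $w \in H^{1+\alpha}$ at $(0,0)$, and then translating the estimate to every boundary point of $G \cap Q_1$ and combining with interior parabolic Schauder estimates yields the full norm bound \eqref{descon}.

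The key steps, in order, are as follows. \emph{Step 1 (normalization and non-degeneracy).} Using the parabolic boundary Harnack principle — the case $k=0$, available from \cite{FSY} — together with the Hopf lemma for the heat equation on the $C^{1,\alpha}$ lateral boundary, I obtain that $u$ vanishes exactly at a linear rate along $\nu$ and that $v/u$ is bounded, indeed parabolically H\"older continuous, up to $SG \cap Q_2$; this sets up the base case of the iteration and gives the first coefficient. \emph{Step 2 (compactness / improvement-of-flatness).} The core lemma asserts: if $v$ vanishes on the boundary portion, $u$ is normalized as in \eqref{assump}, the domain is $C^{1,\alpha}$-close to the half-space $\{x_n > 0\}$ with the quantitative bound $\|f\|_{C^{1,\alpha}} \le c_0$, and $\|v - u\,p\|_{L^\infty(G\cap Q_r)} \le r^{1+\alpha}$ for some admissible first-order $p$, then there is an admissible $\tilde p$ with $\|v - u\,\tilde p\|_{L^\infty(G \cap Q_{\eta r})} \le (\eta r)^{1+\alpha}$ and $|\tilde p - p|$ (coefficients) $\lesssim r^\alpha$, for a universal $\eta \in (0,1)$. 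This is proved by contradiction: rescale parabolically by $r$, suppose the conclusion fails along a sequence, normalize the errors, and pass to the limit. In the limit the domain becomes the half-space, $u$ converges (after normalization, using interior parabolic estimates and the boundary Harnack principle to control it up to the flat boundary) to a positive caloric function vanishing on $\{x_n=0\}$ — which, by the Hopf phenomenon and a Liouville-type argument, must be a multiple of $x_n$ — and the normalized error converges to a caloric function $q$ on the half-cylinder vanishing on $\{x_n=0\}$ with $\|q\|_{L^\infty(Q_\rho)} \le \rho^{1+\alpha}$ for all $\rho \le 1$; hence $q/x_n$ is a caloric-type function with sub-$(1+\alpha)$ growth, forcing it to be a first-order parabolic polynomial, which contradicts the assumed failure once $\eta$ is chosen from interior Schauder estimates. \emph{Step 3 (iteration and the $u\,p$-to-$w$ passage).} Iterating Step 2 on dyadic cylinders gives a geometrically convergent sequence of polynomials $p_j$; their limit is the first-order parabolic Taylor polynomial of $w = v/u$ at $(0,0)$, with the $H^{1+\alpha}$ modulus controlled. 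The replacement of $u\,p$ by the honest quotient uses that $u$ itself is $H^{1+\alpha}$ up to the boundary (parabolic Schauder) and bounded below by $c\, d((x,t), SG)$, so dividing is a bounded operation on the relevant H\"older scale — this is the parabolic analogue of the ``$x_nP \leftrightarrow uP$'' trick. \emph{Step 4 (globalization).} Cover $\overline{G \cap Q_1}$ by the boundary estimate just obtained (applied at each lateral boundary point after translation and rescaling, which is legitimate since the hypotheses are scale- and translation-stable within $Q_2$) plus interior parabolic Schauder estimates on caloric quotients, and sum up to get \eqref{descon}.

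The main obstacle I anticipate is \emph{Step 2}, and within it the compactness passage to the limit. Two parabolic-specific difficulties arise that have no elliptic counterpart: first, the time direction scales like length squared, so ``first-order parabolic polynomials'' include the term linear in $t$, and one must check that the space of limiting solutions with controlled growth is exactly spanned by $\{1, x_1,\dots,x_n, t\}$ restricted appropriately — this is a parabolic Liouville statement that needs the correct growth exponent bookkeeping. Second, the convergence of the rescaled $u$'s up to the flattening boundary is more delicate than in the elliptic case because caloric functions are not real-analytic in time and one cannot use reflection directly; here one leans on the boundary Harnack principle of \cite{FSY} and on parabolic boundary Schauder estimates (\cite{Li}) to get equicontinuity up to $SG$, and must ensure the limit domain is genuinely the flat half-space, which is where the smallness constant $c_0$ in \eqref{as} is used. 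A further technical point is that $v$ is not assumed one-signed, so the error normalization in the contradiction argument must be done in $L^\infty$ rather than via Harnack, and one must verify the limit error function $q$ still vanishes on the flat boundary in the trace sense — routine but requiring care with the convergence of the moving domains.
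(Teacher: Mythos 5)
Your plan matches the paper's strategy in all essentials: a De Silva--Savin compactness argument at the boundary point, iteration of an improvement-of-flatness lemma, the passage from $uP$ to the honest quotient by combining a boundary Lipschitz estimate for the rescaled error with the Hopf lower bound for $u$, and globalization by combining the pointwise boundary estimate with interior Schauder. The paper proves its Lemma \ref{imp1} by letting the flatness parameter $\delta\to 0$ directly rather than by contradiction, but the two framings are interchangeable, and the Taylor-expansion-of-the-odd-reflection step in the paper plays exactly the role of your ``parabolic Liouville'' classification of the limiting caloric function.

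There is, however, a genuine bookkeeping error in your key lemma. You state the improvement step with $\|v - up\|_{L^\infty(G\cap Q_r)} \le r^{1+\alpha}$ and conclude the same exponent on $Q_{\eta r}$. But $u$ itself vanishes linearly at the lateral boundary ($u \asymp d(x, G_t)$, so $\|u\|_{L^\infty(Q_r)} \sim r$), and the correct $L^\infty$ order is therefore one power of $r$ higher: the paper's Lemma \ref{imp1} uses $\|v - uP\|_{L^\infty(G\cap Q_r)} \le r^{2+\alpha}$, with coefficients of $\tilde P - P$ of size $O(r^\alpha)$, so that $\|\tilde P - P\|_{L^\infty(Q_r)} \lesssim r^{1+\alpha}$ and multiplying by $u\sim r$ recovers $r^{2+\alpha}$. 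With your exponent, after iterating, dividing $\|v - up_\infty\|_\infty \lesssim r^{1+\alpha}$ by $u$ (even after upgrading the numerator to a distance-weighted Lipschitz estimate and invoking $u\gtrsim d(x,G_t)$) loses one power of $r$ and gives only $|v/u - p_\infty| \lesssim r^\alpha$ --- i.e.\ boundary Harnack ($H^\alpha$), not $H^{1+\alpha}$. To conclude $H^{1+\alpha}$ at $(0,0)$ one really needs $\|v - uP_0\|_{L^\infty(Q_r)} \lesssim r^{2+\alpha}$; this, together with the boundary Lipschitz bound for the rescaled $\tilde v$ and Hopf for $\tilde u$, yields $|v - uP_0| \le C\,u\, r^{1+\alpha}$ on $Q_{r/2}$, and hence the pointwise $H^{1+\alpha}$ estimate $|v/u - P_0| \le C r^{1+\alpha}$. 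One further small point: at the $k=1$ level the limiting polynomial $Q_0$ may be taken purely spatial, since the odd reflection $V_0$ of $v_0$ across $\{x_n=0\}$ satisfies $D_t V_0(0,0)=0$ and $D_{nn}V_0(0,0)=0$, so no $t$-linear term survives and caloricity of $x_n Q_0$ reduces to $q_n=0$; your parametrization $p = a + b\cdot x + ct$ carries a superfluous $t$-coefficient here.
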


\begin{rmrk}
It is worth mentioning here that, although in Theorem \ref{main1} we assume that the domain be  $C^{1, \alpha}$, instead of just $H^{1+\alpha}$ regular, it  is not  restrictive in its application to  free boundary problems (See Corollary \ref{fb10} below). This follows from the fact that the classical  boundary Harnack inequalities imply that for the parabolic obstacle problem studied in \cite{CPS} the Lipschitz free boundary  near a  regular  point as in Theorem 13.1 in \cite{CPS} is shown in the subsequent Theorem 14.1 in the same paper to be space-time $C^{1, \alpha}$ regular. The  hypothesis of $C^{1,\alpha}$ regularity in Theorem \ref{main1}  is only used to apply the Hopf Lemma as  in Theorem 3' in \cite{LN}. Note that the $H^{1+\alpha}$  regularity assumption on the domain does not imply that (4.6) in \cite{LN} holds, which is  precisely why we assume  that the domain be $C^{1,\alpha}$ regular.
\end{rmrk}

\begin{rmrk}
We now illustrate by an example in \cite{G} that  Theorem \ref{main1} cannot possibly be true at the base or at the corner points of a smooth cylinder.
Consider to fix the ideas $G= B(0, 1) \times (0, 1)$.  Let $u, v$ be the solutions of the heat equation in $G$ corresponding to Cauchy-Dirichlet  data $g(x, t)=t^{\alpha}$ and $h(x, t)= t^{\beta}$ respectively. Clearly,  $u$ and $v$ vanish at $t=0$. Assume that  $\beta < \alpha$ and denote  by  $K(x,t,y,s)$  the  kernel function for $G$ at the boundary such that
\[
u(x, t)= \int_0^t \int_{\partial B} K(x,t,y,s) g(y,s) d\sigma(y) ds,\ \ v(x, t)= \int_{0}^t \int_{\partial B} K(x,t,y,s) h(y,s) d\sigma(y) ds.
\]
Then, we have 
\[
\frac{v(x,t)}{u(x,t)} = \frac{\int_{0}^t \int_{\partial B} K(x,t,y,s) s^{\beta} d\sigma(y) ds}{ \int_{0}^t \int_{\partial B} K(x,t,y,s) s^{\alpha} d\sigma(y) ds} \geq \frac{1}{t^{\alpha - \beta}}  \frac{\int_0^t \int_{\partial B} K(x,t, y, s) s^{\alpha} dy ds}{ \int_0^t \int_{\partial B} K(x,t, y, s) s^{\alpha} dy ds}= \frac{1}{t^{\alpha - \beta}}.
\]
We conclude that the ratio $\frac{v}{u}$ cannot possibly be bounded as $ t\to  0^+$. This example  demonstrates that Theorem \ref{main1} is not true  in a  neighborhood of  a  point  $(x_{0},0) \in B \times \{0\}$. 

The same example can be modified  to demonstrate  that  Theorem \ref{main1} is  not true  in a neighborhood of a corner point  $(x_{0},0) \in \partial B \times \{0\}$. Let $\phi$ be a smooth function on $\partial B$ such that $\phi$ vanishes in a neighborhood of  $x_0$, and let this time $g(x, t)= \phi(x) t^{\alpha}$, $h(x, t)= \phi(x) t^{\beta}$.
As above, we obtain $\frac{v(x,t)}{u(x, t)} \geq \frac{1}{ t^{\alpha - \beta}}$,
and therefore the ratio $\frac{v}{u}$ is not  bounded in a neighborhood of $(x_0,0)$.
\end{rmrk}

Before proving Theorem \ref{main1} we make some preliminary considerations and reductions, and we establish a crucial auxiliary lemma. With $u,v$ as in Theorem \ref{main1},   by  Schauder theory (see for instance Theorem 4.27 in \cite{Li}), we have that $u, v \in H^{1+\alpha}$ up to $\partial_{p} G \cap Q_{3/2}(0,0)$, say. Moreover, by the Hopf lemma in Theorem 3' in \cite{LN}, the Schauder type estimates, \eqref{as}, the normalization condition  \eqref{assump}  and the interior Harnack inequality for parabolic equations, we have that 
\begin{equation}\label{e0}
u_{\nu}(x, t)\ge c > 0\  \text{in}\ \partial_{p} G \cap Q_1(0,0),
\end{equation}
where, we recall, $\nu$ indicates the inward  normal  at  $(x,t) \in D_{t}$.  After parabolic dilations of the domain,  i.e.,  by considering the rescaled functions 
\begin{equation}\label{d1}
 u_{r_0}(x,t)= \frac{u(r_{0} x,r_{0}^{2} t)}{r_{0}},\ \ \ \ \ v_{r_0}(x,t)= \frac{v(r_{0} x,r_{0}^{2} t)}{r_{0}},
\end{equation}
we see that $u_{r_0}, v_{r_0}$  solve the heat equation in the rescaled domain 
\[
G^{r_0}= \{(x,t)\in \R^{n+1}\mid (r_{0} x,r_{0}^2 t) \in G\}.
\]
 Moreover, $G^{r_0}$ is given near $(0,0)$ by the graph of $f_{r_0}(x',t) = \frac{f(r_0 x',r_{0}^2 t)}{r_0}$. From \eqref{d1} it is easy to see that for every $(x,t), (y,s)\in G^{r_0} \cap Q_{2}(0,0)$ one as
\[
\frac{|D_x u_{r_0}(x,t) - D_x u_{r_0}(y,s)|}{(|x-y|^2 + |t-s|)^{\alpha/2}} \le r_0^\alpha [D_x u]_{H^\alpha(G\cap Q_2(0,0))}.
\]
Therefore, given $\delta>0$, we can choose the scaling parameter $r_{0} = r_0([D_x u]_{H^\alpha(G\cap Q_2(0,0))}, \delta)>0$ in such a way that
\begin{equation}\label{udelta}
[D_x u_{r_0}]_{H^{\alpha}(G^{r_0} \cap Q_{2}(0, 0))} \leq \delta.
\end{equation}
It is then clear that for each $\delta>0$ the corresponding function $u_{r_0}$ satisfying \eqref{udelta} does depend on $\delta$. We also note that  because of \eqref{as}, if for a given $\delta>0$ the scaling parameter $r_{0}$ is suitably chosen,  and  if $u$ is  multiplied by an appropriate constant depending on $c$ in \eqref{e0},   we can ensure that the   following holds  
\begin{equation}\label{assump2}
 ||f_{r_0}||_{C^{1,\alpha}} \leq \delta, \  D_x u_{r_0}(0, 0)= e_n.
\end{equation}
We note in passing that the first inequality in \eqref{assump2}
represents a flatness assumption of the boundary of $G$ which will become important in establishing \eqref{voveru} below. Moreover,  the choice  of $\delta$  which is to be fixed  later will  be determined by Lemma \ref{imp1}  where  a compactness argument, in which we let $\delta \to 0$, is employed.  More precisely, in the proof of  Lemma \ref{imp1}, $\delta$ is  chosen  small enough  so that \eqref{deltarho} and \eqref{e7} below hold for some $\rho > 0$ universal. In conclusion, given the choice  of $\delta$  determined by Lemma \ref{imp1}, from now on to simplify the notation we will let  $u_{r_0}=u$, $v_{r_0}=v$, $G^{r_0}=G$, and establish our results for these new $u$, $v$ and $G$. 

We now introduce  the relevant notion of  approximating affine  function.

\begin{dfn}\label{D:app}
 We call  $P$ an  approximating  affine  function at $(0,0)$ if it has the  form $P(x) = \sum_{i=1}^n  a_i x_i + a_0$, with $a_n=0$.
\end{dfn}

 As  in \cite{DS1}, we have the following intermediate lemma.
\begin{lemma}\label{imp1}
Assume that for  some $r \leq 1 $ and $P$ an  approximating affine function   with $|a_i|\leq 1$, one has
\begin{equation}\label{i1}
||v - uP||_{L^{\infty}(G \cap Q_{r}(0,0))} \leq r^{2+ \alpha}.
\end{equation}
Then, there exists an  approximating affine function $\tilde P$ such that for some $C, \rho > 0$ universal, we have  
\begin{equation}\label{pptilde}
 ||P-\tilde P||_{L^{\infty}(G \cap Q_{r}(0,0))} \leq C r^{1+\alpha},
\end{equation}
and
\begin{equation}\label{vutilde}
||v - u\tilde P||_{L^{\infty}(G \cap Q_{\rho r}(0,0))} \leq (\rho r)^{2+ \alpha}.
\end{equation}
\end{lemma}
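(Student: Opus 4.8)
The plan is to establish Lemma~\ref{imp1} by a compactness--contradiction scheme, closely following \cite{DS1}; the two genuinely parabolic ingredients are the use of parabolic dilations with the correct homogeneity and the odd reflection for the heat equation. First I would pass to the right normalized blow-up: given $r\le 1$ and $P$ as in the statement, put
\[
W(y,s)=\frac{(v-uP)(ry,r^{2}s)}{r^{2+\alpha}},\qquad \bar u(y,s)=\frac{u(ry,r^{2}s)}{r},
\]
on the dilated domain $G^{r}=\{(y,s):(ry,r^{2}s)\in G\}$. By \eqref{i1}, $\|W\|_{L^{\infty}(G^{r}\cap Q_{1})}\le 1$ and $W=0$ on $SG^{r}\cap Q_{1}$. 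Since $P$ has $a_{n}=0$ one computes $(\partial_{t}-\Delta)(uP)=-2\,D_{x}u\cdot D_{x}P$; as $D_{x}u(0,0)=e_{n}$ and $[D_{x}u]_{H^{\alpha}}\le\delta$, the tangential derivatives $\partial_{i}u$ ($i<n$) are $O(\delta)$ times a parabolic $\alpha$-power of the distance to $(0,0)$, so after rescaling $|(\partial_{t}-\Delta)W|\le C_{n}\delta$ on $Q_{2}$, uniformly in $r$. Likewise $\bar u$ is caloric, positive, vanishes on $SG^{r}\cap Q_{2}$, satisfies $D_{y}\bar u(0,0)=e_{n}$ and $[D_{y}\bar u]_{H^{\alpha}(Q_{2})}\le\delta$, and --- using $u=0$ on $\partial_{p}G\cap Q_{2}$ together with the flatness \eqref{assump2} --- is bounded on $Q_{2}$ by a universal constant with $\bar u(0,s)=O(\delta)$. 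The point of this homogeneity is the identity, valid for $\tilde P=P+\tilde\ell$,
\[
\frac{(v-u\tilde P)(ry,r^{2}s)}{r^{2+\alpha}}=W(y,s)-\frac{\bar u(y,s)}{r^{1+\alpha}}\,\tilde\ell(ry);
\]
choosing $\tilde\ell(x)=r^{1+\alpha}\ell_{0}(x'/r)$ with $\ell_{0}$ affine in $x'$ collapses the right side to $W-\bar u\,\ell_{0}$ and reduces \eqref{pptilde}--\eqref{vutilde} to the two estimates $\|\ell_{0}\|_{L^{\infty}(Q_{1})}\le C$ and $\|W-\bar u\,\ell_{0}\|_{L^{\infty}(Q_{\rho})}\le\rho^{2+\alpha}$.

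Now the contradiction step. I would fix the universal constants $\rho\in(0,1)$ and $C$ below --- they will be extracted from interior estimates for caloric functions, so there is no circularity --- and suppose the conclusion fails: there are $\delta_{k}\to 0$, $\delta_{k}$-flat domains $G_{k}$, caloric $u_{k}>0$ and $v_{k}$, scales $r_{k}\le 1$ and approximating affine $P_{k}$ with $|a_{i}^{(k)}|\le 1$ realizing \eqref{i1}, but for which no approximating affine $\tilde P$ with $\|P_{k}-\tilde P\|_{L^{\infty}(G_{k}\cap Q_{r_{k}})}\le C r_{k}^{1+\alpha}$ satisfies \eqref{vutilde}. Let $W_{k},\bar u_{k}$ be the blow-ups above. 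By the Schauder estimates up to the $C^{1,\alpha}$ lateral boundaries (Theorem~4.27 in \cite{Li}), whose constants are uniform because the boundaries are flat with seminorm $\le\delta_{k}$, a subsequence gives $\bar u_{k}\to u_{0}$ and $W_{k}\to W_{0}$ in $C^{1}_{\mathrm{loc}}$ up to $\{y_{n}=0\}$; here $u_{0}$ is caloric with $D_{y}u_{0}\equiv e_{n}$ and $u_{0}=0$ on $\{y_{n}=0\}$, hence $u_{0}(y,s)=y_{n}$, and $W_{0}$ is caloric in $\{y_{n}>0\}\cap Q_{1}$, vanishes on $\{y_{n}=0\}$, with $\|W_{0}\|_{L^{\infty}(Q_{1})}\le 1$ (the error $-2D_{x}u_{k}\cdot D_{x}P_{k}$ rescales to size $\le C_{n}\delta_{k}\to 0$, so the limit is exactly caloric).

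The structural heart is the odd reflection. Extending $W_{0}$ oddly across $\{y_{n}=0\}$ yields a function caloric in $Q_{1}$ and odd in $y_{n}$; consequently $H_{0}:=W_{0}/y_{n}$ extends smoothly and is \emph{even} in $y_{n}$, so in particular $\partial_{n}H_{0}(0,0)=0$. Its parabolic first-order part at the origin is therefore purely tangential, $\ell_{0}(y'):=H_{0}(0,0)+D_{y'}H_{0}(0,0)\cdot y'$, and interior estimates give $\|\ell_{0}\|_{L^{\infty}(Q_{1})}\le 2C_{0}$ and $H_{0}-\ell_{0}=O(|y|^{2}+|s|)$ near $0$ --- the vanishing of $\partial_{n}H_{0}(0,0)$ being exactly what removes the would-be $O(|y_{n}|)$ term --- whence $|W_{0}-y_{n}\ell_{0}|\le C_{1}\rho^{3}$ on $Q_{\rho}$. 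Set $C:=2C_{0}$ and choose $\rho$ universal with $C_{1}\rho^{\,1-\alpha}\le\tfrac12$; this is precisely where $\alpha<1$ enters, converting one order of Taylor expansion into a genuine gain $\rho^{3}\le\tfrac12\rho^{2+\alpha}$. Taking $\tilde\ell_{k}(x):=r_{k}^{1+\alpha}\ell_{0}(x'/r_{k})$ and $\tilde P_{k}:=P_{k}+\tilde\ell_{k}$ --- still an approximating affine function since $\ell_{0}$ has no $x_{n}$ term --- the displayed identity together with
\[
\|W_{k}-\bar u_{k}\ell_{0}\|_{L^{\infty}(Q_{\rho})}\le\|W_{k}-W_{0}\|_{L^{\infty}(Q_{\rho})}+\|W_{0}-y_{n}\ell_{0}\|_{L^{\infty}(Q_{\rho})}+2C_{0}\|\bar u_{k}-y_{n}\|_{L^{\infty}(Q_{\rho})}
\]
gives $\limsup_{k}\|W_{k}-\bar u_{k}\ell_{0}\|_{L^{\infty}(Q_{\rho})}\le C_{1}\rho^{3}\le\tfrac12\rho^{2+\alpha}$; hence for $k$ large this $\tilde P_{k}$ verifies \eqref{pptilde} (with constant $C$) and \eqref{vutilde}, contradicting the choice of the sequence.

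The main obstacle I anticipate is not the compactness bookkeeping, which is routine given the uniform Schauder theory on the flattened domains, but rather exploiting the reflection structure correctly: it is the evenness of $W_{0}/y_{n}$, forcing $\partial_{n}(W_{0}/y_{n})(0,0)=0$, that keeps the corrector $\tilde P$ inside the class of approximating affine functions and simultaneously manufactures the extra power of $\rho$ which, for $\alpha<1$, makes the iteration close; a secondary delicate point is verifying that the blow-up limit is exactly caloric, which relies on $D_{x}u(0,0)=e_{n}$ in tandem with the smallness of $[D_{x}u]_{H^{\alpha}}$.
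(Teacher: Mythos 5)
Your proof is correct and takes essentially the same approach as the paper: rescale $v-uP$ by $r^{2+\alpha}$ onto the dilated domain, pass to the limit $\delta\to 0$ by compactness and uniform Schauder estimates, use the odd reflection of the caloric limit across $\{y_n=0\}$ to extract a corrector affine function with no $x_n$-term and obtain the $\rho^{3}\le\tfrac12\rho^{2+\alpha}$ gain, then transfer back to $u,v$. The only differences are cosmetic: you frame the limit passage as a compactness--contradiction argument and analyze $W_0/y_n$ directly (observing it is even in $y_n$), whereas the paper fixes $\delta$ small directly and reads the vanishing of the $x_n$-coefficient off the Taylor expansion of the odd extension $V_0$ via $D_{nn}V_0(0,0)=0$; these are equivalent.
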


\begin{proof}
We let $\tilde G= \left\{(\frac{x}{r}, \frac{t}{r^2})\mid (x,t) \in G\right\}$, and consider the function $\tilde v$ defined in $\tilde G$ by the following equation
\begin{equation}\label{e1}
v(x,t)= u(x,t) P(x) + r^{2+\alpha} \tilde v(\frac{x}{r},\frac{t}{r^2}).
\end{equation}
Although in the next definition $\tilde u$ does not have the same meaning as $\tilde v$ in \eqref{e1} above, for later purposes we nonetheless abuse the notation and set
\begin{equation}\label{e2}
\tilde u(x,t)= \frac{u(rx,r^2 t)}{r}, \ \ \ \ \ \ (x,t)\in \tilde G.
\end{equation}

Before proceeding we pause to recall that in the reduction which precedes Lemma \ref{imp1}, given any $\delta>0$ we have chosen $r_0 >0$, depending on $\delta$, such that the function $u = u_{r_0}$  satisfy \eqref{udelta} and \eqref{assump2}. Since we also set $v = v_{r_0}$, it is clear that both $u$ and $v$ do depend on $\delta$, and therefore so do $\tilde u$ and $\tilde v$. The reader should keep this in mind when below we let $\delta \to 0$ along a sequence.

Since $u, v$ and $P$ are solutions of the heat equation, we easily obtain from \eqref{e1}
\begin{equation}\label{e3}
0= (\Delta v - v_t)(x,t)= 2 \sum_{i=1}^{n-1} a_i D_i u(x,t) + r^{\alpha}(\Delta \tilde v - \tilde v_t)(\frac{x}{r},\frac{t}{r^2}),
\end{equation}
where we have denoted by $Du = (D_1u,...,D_nu)$.
Moreover,  by  \eqref{assump2} we know that $D_iu(0,0) = 0$ for $i=1,...,n-1$, and since by \eqref{udelta} we also have 
$[Du]_{H^{\alpha}(G \cap Q_{2}(0, 0))} \leq \delta$, we conclude  
that for all $i=0, ...n-1$,
\begin{equation}\label{z1}
||D_i u||_{L^{\infty}( G \cap Q_{r}(0, 0))} \leq \delta r^{\alpha}.
\end{equation}
Therefore, by using \eqref{z1} in  \eqref{e3} we see that in $\tilde G \cap  Q_{1}(0,0)$ one has
\begin{equation}\label{inhom}
|\Delta \tilde v - \tilde v_t| \leq C \delta.
\end{equation}
Furthermore, by \eqref{e1} we obtain from \eqref{i1} the following bound  
\[
||\tilde v||_{L^{\infty}(\tilde G \cap  Q_{1}(0,0))}  \leq 1.
\]

In addition, $\tilde v$ vanishes on $\partial \tilde G \cap Q_{1}(0,0)$. Therefore, if we let $\delta \to 0$ along a sequence, we will have by compactness (by using uniform  interior $H^{2+ \alpha}$ estimates and  boundary $H^{1+\alpha}$  estimates)  that for a subsequence  $\delta \to 0$, we have $\tilde v = \tilde v(\delta)  \to v_0$ uniformly on compact subsets, where the limit function $v_0$ has the properties
\begin{equation}\label{s}
\begin{cases}
\Delta v_0 - D_t v_0 = 0\ \ \text{in}\ B_{1}^{+} \times (-1, 0],
\\
||v_0||_{L^{\infty}} \leq 1,
\\
v_0 = 0 \ \text{on}\ (\{x_n=0\} \cap B_{1}(0)) \times (-1,0].
\end{cases}
\end{equation}
Here, $B_{1}^{+}= B_{1} \cap \{x_n > 0\}$. We now denote by $V_0$ the odd reflection in $x_n$ of the function $v_0$ to the whole $B_{1}(0) \times (-1,0]$. Then, $\Delta V_0 - D_t V_0 = 0$ in $B_{1}(0) \times (-1,0]$, and $V_0$ satisfies the remaining two conditions in \eqref{s} above. In particular, from the smoothness of $V_0$ up to $(\{x_n=0\} \cap B_{1}(0)) \times (-1,0]$ and the third property in \eqref{s} above we see that $D_i V_0 =0$ for $i=1 ,..., n-1$, and $D_t V_0 = 0$ on $(\{x_n=0\} \cap B_{1}(0)) \times (-1,0]$. This gives $D_{ij} V_0 = 0$, $D_{jt} V_0 = 0$, $i, j = 1,..., n-1$, and $D_{tt}V_0 = 0$ in $(\{x_n=0\} \cap B_{1}(0)) \times (-1,0]$. Furthermore, since $V_0$ is odd in $x_n$ we also have $D_{nn} V_0(0,0) = 0$. Using the fact that the variable $t$ has weight two, from the Taylor expansion of $V_0$ at $(0,0)$ we obtain that there exists $\rho>0$ universal ($\rho \le (4C)^{-1/(1-\alpha)}$ would do), such that 
\begin{equation}\label{e4}
||v_0- x_n Q_0||_{L^{\infty}(B_{\rho}^{+}(0) \times [-\rho^2, 0])} \leq C \rho^3 \leq \frac{1}{4} \rho^{2+ \alpha},
\end{equation}
where $Q_0(x) = \Sigma_{i=1}^{n-1}  q_i x_i +q_0$ is an affine function which is approximating (note that $q_n = 0$ is a consequence of $D_{nn}V_0(0,0) = 0$), with 
\begin{equation}\label{z10}
|q_i| \leq C_1
\end{equation}
for some  $C_1>0$ universal. In particular,  the product $x_n Q_0(x)$ is harmonic. We now fix such a universal $\rho>0$. Since $\tilde v = \tilde v(\delta) \to v_0$ uniformly on compact sets as $\delta \to 0$ on a subsequence, by compactness we see that for $\delta$ sufficiently small we have,
\begin{equation}\label{deltarho}
||\tilde v - v_0||_{L^{\infty}(\tilde G \cap Q_{\rho}(0,0) )} \le   \frac{1}{4} \rho^{2+ \alpha}.
\end{equation}
From \eqref{deltarho} and \eqref{e4} we thus conclude that for $\delta$ sufficiently small
\begin{align}\label{e5}
||\tilde v - x_n Q_0||_{L^{\infty}(\tilde G \cap Q_{\rho}(0,0))} & \leq 
||\tilde v - v_0||_{L^{\infty}(\tilde G \cap Q_{\rho}(0,0))} + ||v_0 - x_n Q_0||_{L^{\infty}(\tilde G\cap Q_{\rho}(0,0))}
\\
& \le \frac{1}{4} \rho^{2+ \alpha} + \frac{1}{4} \rho^{2+ \alpha}  = \frac{1}{2} \rho^{2+ \alpha}.
\notag
\end{align}
Moreover, from \eqref{udelta} and \eqref{assump2} we have that
\begin{equation}\label{e6}
||\tilde u - x_n||_{L^{\infty}(\tilde G \cap Q_{\rho}(0,0))} \leq \delta.
\end{equation}
Thus, from the triangle  inequality, if with $C_1$ as in \eqref{z10} we further restrict $\delta$ in dependence of $\rho$ so that $n C_1 \delta \leq \frac{1}{2} \rho^{2+\alpha}$ , we have 
\begin{equation}\label{e7}
||\tilde v - \tilde u Q_0||_{L^{\infty}(\tilde G \cap Q_{\rho}(0,0))} \leq \rho^{2+ \alpha}.
\end{equation}
The conclusion now follows  by taking $\tilde P(x)= P(x) + r^{1+\alpha} Q_0(\frac{x}{r})$ and by rewriting $\tilde v, \tilde u$ in terms of $v, u$. We notice explicitly that \eqref{pptilde} follows from the definition of $\tilde P$ and from the fact that \eqref{z10} gives
\[
||Q_0(\frac{.}{r})||_{L^{\infty}(G \cap Q_{r}(0, 0))} \le n C.
\]
By using \eqref{e1}, \eqref{e2} and the definition of $\tilde P$, we finally obtain
\begin{align*}
||v - u\tilde P||_{L^{\infty}(G \cap Q_{\rho r}(0,0))}\ & =\  ||r^{2+\alpha} \tilde v(\frac{.}{r},\frac{.}{r^2}) - r^{1+\alpha} u Q_0(\frac{.}{r})||_{L^{\infty}(G \cap Q_{\rho r}(0,0))}
\\
&  =\ r^{2+\alpha} ||\tilde v(\frac{.}{r},\frac{.}{r^2}) -  \tilde u(\frac{.}{r},\frac{.}{r^2})  Q_0(\frac{.}{r})||_{L^{\infty}(G \cap Q_{\rho r}(0,0))}
\\
& \le\ (\rho r)^{2+\alpha},
\end{align*}
where in the last inequality we have used \eqref{e7}. This proves \eqref{vutilde}, thus completing the proof.

\end{proof}

With this lemma in hand, one can establish Theorem \ref{main1} by arguing as in \cite{DS1}. We nevertheless provide the details for the sake of completeness.

\begin{proof}[Proof of Theorem \ref{main1}]
Now  suppose there exists   an affine function $P$ for which the hypothesis of Lemma \ref{imp1} holds for some $r>0$.  Then with $\tilde v, \tilde u$ as in the proof of Lemma \ref{imp1}, we first  note that   by $H^{1+\alpha}$ estimates up to the boundary as in Theorem 4.27 in \cite{Li} and from the fact that $\tilde v$   vanishes on $\tilde G \cap Q_{2}(0, 0)$, we have for every $(x,t) \in \tilde G \cap Q_{1/2}(0, 0)$
\begin{equation}\label{f1}
|\tilde v(x, t)|  \leq C_1 d(x,\tilde G_{t}) ,
\end{equation}
where $\tilde G_{t}= \{x\mid (x,t) \in \partial  \tilde G \cap \overline{Q}_{1}(0,0)\}$, and $d(x,\tilde G_t)$ is the Euclidean distance  of the point $x \in \Rn$ from $\tilde G_t$. Note that \eqref{f1} is a reformulation of the Lipschitz estimate in the  $x$ variable at the boundary.  Moreover, from \eqref{e0} and the definition \eqref{e2} of $\tilde u$ we find
\[
\tilde u_{\nu} \geq c > 0,
\]
where $c$ is the universal constant in \eqref{e0}. From this inequality we easily obtain
\begin{equation}\label{f2}
\tilde u(x,t) \geq  C_2 d(x, \tilde G_{t}).
\end{equation}
The estimates \eqref{f1} and \eqref{f2} imply that   
\begin{equation}\label{tildes}
|\tilde v| \leq C\tilde u\ \ \ \ \ \text{in}\  \tilde G \cap  Q_{1/2}(0, 0).
\end{equation}

We now see how the desired conclusion \eqref{descon} above can be derived from \eqref{tildes} and from Lemma \ref{imp1}.

First, by rewriting $\tilde v, \tilde u$ in terms of $v$ and $u$ using \eqref{e1}, \eqref{e2}, we obtain as a direct consequence of \eqref{tildes} 
\begin{equation}\label{f3}
|v -  uP| \leq C u r^{1+ \alpha} \  \text{in}\   G \cap  Q_{r/2}(0,0).
\end{equation}
 Moreover, since by \eqref{f2} the function $\tilde u$ is bounded away from zero from below in $ \tilde G \cap Q_{1/4}(\frac{1}{2} e_n,0))$, from $H^{1+\alpha}$ estimates for $\tilde v$ and the fact that this function satisfies \eqref{inhom}, we also have
\begin{equation}\label{e11}
||\frac{\tilde v}{\tilde u}||_{H^{1+\alpha}( \tilde G \cap Q_{1/4}(\frac{1}{2} e_n,0))} \leq C \big(||v||_{L^\infty(G\cap Q_2(0,0))} + 1\big).
\end{equation}
In \eqref{e11} we have used the fact that, because of \eqref{assump2} and the definition of $\tilde G$, at each  time  level $t \in (-1,0]$ the set $\tilde K_t=\{x\mid (x,t)\in \tilde G \cap Q_{1/4}( \frac{1}{2} e_n,0)\}$ is at a Euclidean distance from $\tilde G_t$ which is bounded  below  by $C_5$, for  some $C_5>0$  universal.
In addition, the identity 
\begin{equation}\label{e25}
\frac{v(x,t)}{u(x,t)}= P(x) + r^{1+\alpha} \frac{\tilde v(\frac{x}{r},\frac{t}{r^2})}{\tilde u(\frac{x}{r},\frac{t}{r^2})},
\end{equation}
which follows from \eqref{e1}, \eqref{e2},
implies that 
\begin{equation}\label{e26}
[D_x (\frac{v}{u})]_{H^{\alpha}( G \cap Q_{r/4}( \frac{r}{2} e_n,0))}= [D_x (\frac{\tilde v}{\tilde u})]_{H^{\alpha}( \tilde G \cap Q_{1/4}( \frac{1}{2} e_n,0))}
\end{equation}
and
\begin{equation}\label{e27}
< \frac{v}{u} >_{1+\alpha; G \cap Q_{r/4}( \frac{r}{2} e_n, 0))}= < \frac{\tilde v}{\tilde u}>_{1+ \alpha; \tilde G \cap Q_{1/4}( \frac{1}{2} e_n, 0))}.
\end{equation}
(See page  46 in \cite{Li} for the definition of the seminorm  $<w>_{1+\alpha; \Om}$ which  corresponds  to the $\frac{1+\alpha}{2}$-H\"older seminorm  of $w$ in $t$).

Summing up, \eqref{e11}, \eqref{e25}, \eqref{e26} and \eqref{e27} imply the following estimate
\begin{equation}\label{voveru}
||\frac{v}{u}||_{H^{1+\alpha}(G \cap Q_{r/4}(\frac{r}{2} e_n,0))} \leq C \big(||v||_{L^\infty(G\cap Q_2(0,0))} + 1\big).
\end{equation}
We  would like to mention that the region $G \cap Q_{r/4}( \frac{r}{2} e_n, 0)$ is at a parabolic distance  proportional to $r$  from $\partial G$, a fact that follows from the first inequality in \eqref{assump2} which represents a flatness assumption on the boundary.

Multiplying $v$ by a suitable constant, we may now assume that the hypothesis of the Lemma \ref{imp1} is satisfied for $r_0$ small and $P=0$. Given any $0<r<1$, with $\rho$ fixed as in Lemma \ref{imp1} we choose $k\in \mathbb N$ such that $\rho^{k+1} r_0 \le r < \rho^k r_0$. We thus apply Lemma \ref{imp1} iteratively, i.e., first for $r_0$, then for $\rho r_0, \rho^2 r_0$  and so on. We finally obtain a limiting affine function $P_{0}$ such that
\begin{equation}\label{tilde1}
||v -uP_0||_{L^{\infty}(G \cap Q_{r}(0,0))} \leq C r^{2+ \alpha}\ \text{for}\  r \leq r_0.
\end{equation}
We note explicitly that 
\[
P_0(x) = \sum_{i=1}^\infty  (\rho^{i-1}r_0)^{1+\alpha} Q_i(\frac{x}{ \rho^{i-1}r_0}),
\]
 where $Q_i$ is the affine function obtained after the $i$-th application of Lemma \ref{imp1} in the iteration argument.
Given  that \eqref{tilde1} holds with $P=P_0$ for every $r\leq r_0$, therefore for any given $r \leq r_0$,  we can   now repeat the arguments  which lead  to  \eqref{f3} with $P=P_0$ and consequently   obtain for all $(x,t) \in G \cap Q_{1}(0,0)$ 
\begin{equation}\label{e15}
\left|\frac{v}{u}(x, t) -P_0(x)\right| \leq C (|x|^2 + |t|)^{\frac{1+\alpha}{2}}.
\end{equation}
This implies the $H^{1+\alpha}$-regularity at the  boundary point $(0,0)$. Combining \eqref{e15} with \eqref{voveru}, the desired conclusion follows  by arguing  for instance  as in the proof of Proposition 4.13 in \cite{CC}. We mention that, although the latter result is for the elliptic setting, the same argument goes through in the parabolic setting when the Euclidean distance in $\Rn$ is replaced  by the  parabolic distance in $\R^{n+1}$.

\end{proof}

\section{Higher regularity}\label{s2}

In this section we  establish the case $k>1$ of Theorem \ref{main1}. Our main result is the following.

\begin{thrm}\label{t10}
Let $G$ be of class  $H^{k+\alpha}$ in $G \cap Q_{2}(0,0)$, the other assumptions on $G$ being as in Section \ref{r1}. Let  $u$ and $v$ be two solutions of the heat  equation in $G \cap Q_{2}(0,0)$ such that $u, v$  vanish  on $\partial_{p} G \cap Q_{2}(0,0)$. Also, suppose that $u>0$ in $G \cap Q_{2}(0,0) $  and assume that it satisfy the normalization \eqref{assump}. Then, for some $C>0$ universal one has
\begin{equation}\label{mr}
||\frac{v}{u}||_{H^{k+\alpha}(G \cap Q_1(0,0))} \leq C \left(||v||_{L^{\infty} (G\cap Q_{2}(0,0))} + 1\right).
\end{equation}
\end{thrm}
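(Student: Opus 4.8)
The plan is to adapt the compactness-plus-iteration scheme of Theorem \ref{main1} to higher order, replacing approximating affine functions by approximating parabolic polynomials. I would first introduce the relevant notion: a parabolic polynomial $P$ of \emph{parabolic degree} $k+1$ that is ``approximating at $(0,0)$'' in the sense that $x_n$ divides every monomial containing only the nontangential direction at the right order, i.e. $P$ restricted to the flattened boundary $\{x_n=0\}$ vanishes to the appropriate order together with the relevant time derivatives; concretely $P$ should be such that $uP$ can play the role that $x_n Q$ played in Section \ref{r1}. Then I would state and prove the analogue of Lemma \ref{imp1}: if $\|v-uP\|_{L^\infty(G\cap Q_r)}\le r^{k+1+\alpha}$ for some approximating parabolic polynomial $P$ of parabolic degree $\le k+1$ with universally bounded coefficients, then there is a corrected approximating parabolic polynomial $\tilde P$, with $\|P-\tilde P\|_{L^\infty(G\cap Q_r)}\le Cr^{k+\alpha}$, such that $\|v-u\tilde P\|_{L^\infty(G\cap Q_{\rho r})}\le(\rho r)^{k+1+\alpha}$ for universal $\rho$.

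The proof of this key lemma follows the same blueprint as Lemma \ref{imp1}. Rescale by $r$, set $v(x,t)=u(x,t)P(x,t)+r^{k+1+\alpha}\tilde v(x/r,t/r^2)$, and compute $(\Delta-\partial_t)$ applied to this identity: since $u$, $v$ solve the heat equation, $\tilde v$ satisfies an inhomogeneous heat equation whose right-hand side involves the lower-order Taylor coefficients of $u$ near $(0,0)$ paired against derivatives of $P$. Here the crucial new input is that, because the domain is $H^{k+\alpha}$ and $u$ vanishes on the lateral boundary, $u$ has a full parabolic Taylor expansion up to order $k+\alpha$ at $(0,0)$ of the form $u=x_n+(\text{higher order})$ after the normalizations analogous to \eqref{assump2}; flatness of the boundary (small $\|f_{r_0}\|_{C^{k,\alpha}}$) and smallness of $[D_x^k u]_{H^\alpha}$ make the relevant coefficients small, so the inhomogeneity $|\Delta\tilde v-\tilde v_t|\le C\delta$. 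One also needs the apriori bound $\|\tilde v\|_{L^\infty(\tilde G\cap Q_1)}\le 1$ from the hypothesis, and $\tilde v=0$ on the flattened boundary. Letting $\delta\to0$ along a subsequence, uniform interior $H^{k+1+\alpha}$ and boundary $H^{k+\alpha}$ Schauder estimates (Theorem 4.27 in \cite{Li}) give a limit $v_0$ solving the heat equation in $B_1^+\times(-1,0]$, vanishing on $\{x_n=0\}$, with $\|v_0\|_{L^\infty}\le1$. Odd-reflect $v_0$ in $x_n$ to get $V_0$ caloric in the full cylinder; the vanishing of $V_0$ and its tangential and time derivatives on $\{x_n=0\}$, together with oddness, forces the parabolic Taylor polynomial of $V_0$ at $(0,0)$ of parabolic degree $k+1$ to be of the form $x_n Q_0$ with $Q_0$ an approximating parabolic polynomial of parabolic degree $k$ and universally bounded coefficients, and $x_nQ_0$ caloric. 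The parabolic Taylor remainder estimate (using that $t$ has weight two) gives $\|v_0-x_nQ_0\|_{L^\infty(B_\rho^+\times[-\rho^2,0])}\le C\rho^{k+2}\le\frac14\rho^{k+1+\alpha}$ for $\rho$ universal. Combining with the convergence $\tilde v\to v_0$, with $\|\tilde u-x_n\|_{L^\infty}\le\delta$ (sharpened to control $\tilde u$ against all monomials of $Q_0$), and choosing $\delta$ small, one gets $\|\tilde v-\tilde uQ_0\|_{L^\infty(\tilde G\cap Q_\rho)}\le\rho^{k+1+\alpha}$; undoing the rescaling with $\tilde P(x,t)=P(x,t)+r^{k+\alpha}Q_0(x/r,t/r^2)$ yields the claim.

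With the lemma in hand, the proof of Theorem \ref{t10} proceeds exactly as for Theorem \ref{main1}. After the reductions and rescalings of Section \ref{r1} (now using $C^{k,\alpha}$/$H^{k+\alpha}$ flatness and smallness of $[D_x^ku]_{H^\alpha}$), multiply $v$ by a constant so that the hypothesis of the lemma holds for $P=0$ and some small $r_0$. Iterate the lemma along the geometric sequence $\rho^j r_0$ to produce a limiting approximating parabolic polynomial $P_0=\sum_{i\ge1}(\rho^{i-1}r_0)^{k+\alpha}Q_i(\cdot/(\rho^{i-1}r_0))$, with the series converging in the appropriate norm, satisfying $\|v-uP_0\|_{L^\infty(G\cap Q_r)}\le Cr^{k+1+\alpha}$ for all $r\le r_0$. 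Then, using $|\tilde v|\le C\tilde u$ (from the boundary Lipschitz estimate for $\tilde v$ and the Hopf-type lower bound $\tilde u_\nu\ge c>0$, which requires the $C^{1,\alpha}$ hypothesis as remarked) one converts this into $|v/u-P_0|\le C(|x|^2+|t|)^{(k+1+\alpha)/2}$, giving the pointwise $H^{k+\alpha}$-regularity of $v/u$ at $(0,0)$. Finally, interior $H^{k+\alpha}$ estimates for $\tilde v/\tilde u$ on cylinders $Q_{1/4}(\tfrac12e_n,0)$ at fixed distance from the boundary, transported back by the scaling identity $v/u=P+r^{k+\alpha}(\tilde v/\tilde u)(\cdot/r,\cdot/r^2)$, give the full seminorm control near the boundary; combining the pointwise boundary estimate with the near-boundary interior estimate as in Proposition 4.13 of \cite{CC} (parabolic distance replacing Euclidean) yields \eqref{mr}.

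I expect the main obstacle to be the bookkeeping of parabolic Taylor expansions at higher order: precisely identifying which derivatives of $V_0$ vanish on $\{x_n=0\}$ after odd reflection, verifying that the degree-$(k+1)$ parabolic Taylor polynomial of $V_0$ is genuinely divisible by $x_n$ with the quotient $Q_0$ approximating of parabolic degree $k$, and controlling the inhomogeneity $\Delta\tilde v-\tilde v_t$ — which now contains products of the full array of lower-order Taylor coefficients of $u$ with derivatives of the degree-$(k+1)$ polynomial $P$ — uniformly by $C\delta$. This requires a careful choice of the successive smallness of $\delta$ relative to $\rho$ and a clean definition of ``approximating parabolic polynomial'' that is stable under the correction $P\mapsto\tilde P$; once that framework is fixed, the compactness and iteration steps are essentially as in Section \ref{r1}.
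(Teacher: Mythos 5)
Your overall scheme — rescale, compactness, odd reflection, parabolic Taylor approximation, iteration — matches the paper's, and you correctly flag that the higher-order bookkeeping is where the difficulty lies. But your sketch of the key lemma has a genuine gap at precisely that point. After passing to the limit $\delta\to 0$ and extracting $Q_0$ from the caloric limit $v_0$, what $Q_0$ satisfies is the \emph{limit} compatibility condition (that $x_nQ_0$ be caloric, i.e.\ the unperturbed linear system \eqref{f13}), and this does \emph{not} make $P+r^{k+\alpha}Q_0(\cdot/r,\cdot/r^2)$ an approximating polynomial for $v/u$ relative to the actual $u$. For $k\ge 2$ the approximating condition is the full linear system \eqref{f5} with $d_{q,\kappa}=0$, whose coefficients $c^{m,\ell}_{q,\kappa}$ encode the degree-$\ge 2$ Taylor data of $u-x_n$; these are $O(\delta)$ but nonzero, and they rescale to new coefficients $\tilde c^{m,\ell}_{q,\kappa}$ in a way that is not compatible with simply adding $r^{k+\alpha}Q_0(\cdot/r,\cdot/r^2)$ to $P$. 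If one takes $\tilde P=P+r^{k+\alpha}Q_0(\cdot/r,\cdot/r^2)$ as you do, then $\tilde P$ is in general not approximating, and the next step of the iteration fails: one can no longer cancel $r^{k-1+\alpha}$ and conclude $|\Delta\tilde v-\tilde v_t|\le C\delta$.

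The paper resolves this by replacing $Q_0$ with a corrected polynomial $\tilde Q$ whose coefficients are determined by the perturbed linear system \eqref{f12}; subtracting the unperturbed system \eqref{f13} satisfied by $Q_0$ shows that the coefficients of $\tilde Q-Q_0$ solve a triangular linear system with $O(\delta)$ right-hand side, determined inductively on the parabolic order with the coefficients $\tilde b_{m,\ell}$ having $m_n=0$ set equal to those of $Q_0$. This gives $\|\tilde Q-Q_0\|\le C\delta$, which preserves the quantitative estimate, and $\tilde P=P+r^{k+\alpha}\tilde Q(\cdot/r,\cdot/r^2)$ is then approximating. This correction is the genuinely new phenomenon at $k\ge 2$ — in the $k=1$ case the approximating condition $a_n=0$ is trivially stable under adding tangential affine corrections, so no such step appears in Lemma \ref{imp1}. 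You gesture at this at the end (``a clean definition \ldots that is stable under the correction $P\mapsto\tilde P$'') but your actual proposed construction skips the correction and would not produce an approximating $\tilde P$. A secondary inaccuracy: your informal definition (``$P$ restricted to $\{x_n=0\}$ vanishes to the appropriate order'') is not the right characterization — the condition is on $(\Delta-D_t)(uP)$ having high parabolic order of vanishing at the origin, not on $P$ vanishing on the boundary; indeed already for $k=1$ the approximating $P=a_0+\sum_{i<n}a_ix_i$ need not vanish on $\{x_n=0\}$.
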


Before  proving the theorem  above, we   introduce some additional  notations. Henceforth, we let $\mathbb N_0 = \mathbb N \cup \{0\}$. Given a multi-index $m = (m_1, .... m_n)\in \mathbb N_0^n$, for $x = (x_1,...,x_n)\in \Rn$ we denote by $x^m= x_{1}^{m_{1}}\ ...\ x_{n}^{m_{n}}$ the monomial of degree $|m| = m_1+...+m_n$. Henceforth, for $i = 1,...,n$, the notation $\overline{i}$ will indicate the multi-index in $\mathbb N_0^n$ which has $1$ in the $i$-th position and $0$ elsewhere.

\begin{dfn}\label{D:parpol}
By a parabolic polynomial of degree $k\in \mathbb N_0$ we mean an expression  of the form 
 \[
 P(x,t) = \sum_{0\le |m| + 2\ell \le k} a_{m,\ell} x^m t^\ell,
 \]
where $a_{m,\ell} \in \R$. Given such a $P$ we define its norm as follows
\[
||P|| = \max_{0\le |m| + 2\ell \le k}\ |a_{m,\ell}|.
\]
\end{dfn}

Following \cite{DS1}, see also Chapter 1 in \cite{CC}, we introduce the following definition.

\begin{dfn}\label{D:holderclasses}
We say that a function $f$ is pointwise $H^{k+\alpha}$ at $(0,0)$ if there exists a parabolic polynomial $P(x,t)$ of degree $k$ such that $f(x,t) = P(x,t) + O\left((|x|^2+|t|)^{\frac{k+\alpha}{2}})\right)$. We indicate this with $f\in H^{k+\alpha}(0,0)$ and we denote with $||f||_{H^{k+\alpha}(0,0)}$ the smallest $M>0$ such that $||P||\le M$ and $|f(x,t) - P(x,t)|\le M (|x|^2+|t|)^{\frac{k+\alpha}{2}})$ for $t\le 0$. We say that $f$ is pointwise $H^{k+\alpha}$ at $(x_0,t_0)$ if the function $h(x,t) = f(x+x_0,t+t_0)$ is pointwise $H^{k+\alpha}$ at $(0,0)$, and we indicate this with $f\in H^{k+\alpha}(x_0,t_0)$. Finally, given a bounded set $G\subset \R^{n+1}$ we say that $f\in H^{k+\alpha}(G)$ if 
\[
||f||_{H^{k+\alpha}(G)}\overset{def}{=} \underset{(x_0,t_0)\in G}{\sup} ||f||_{H^{k+\alpha}(x_0,t_0)} <\infty.
\] 
\end{dfn}

We note that the class $H^{k+\alpha}(G)$ coincides with that defined on p. 46 in \cite{Li}.

Now  with $u,v$ as in Theorem \ref{t10}, it follows from the Schauder theory (see Chapters 4 and 5 in \cite{Li})  that  $u, v$ are in $H^{k+\alpha}$ up to $\overline{G \cap Q_{2}(0,0)}$. As in Section \ref{r1}, we   assume as well that
\begin{equation}\label{as1}
\nu= \nu_0(0) = e_n,\ \ f(0,0)=0,\  D'f(0,0)=0,\  \text{and}\  ||f||_{H^{k+ \alpha}} \leq c_0,
\end{equation}
where $c_0$ is a dimensional constant which is chosen sufficiently small in such a way that $(e_n, -3/2) \in G$. This latter property can always be achieved by suitably scaling the domain as  in \eqref{d1}-\eqref{assump2}.  Furthermore, we assume that
\begin{equation}\label{h1}
Du(0, 0)=e_n,\ \ \ ||u-x_n||_{H^{k + \alpha}(G \cap Q_{2}(0,0))} \leq \delta,\ \ \ ||f||_{H^{k +  \alpha}} \leq \delta.
\end{equation}
The second inequality in \eqref{h1} can be seen  as follows. Since $u(0,0)=0$ there is a parabolic polynomial  $P_{u}$, of  degree at most $k$, such that for all $(x,t) \in G \cap Q_{3/2}(0,0)$ one has
\begin{equation}\label{h100}
|u(x,t) - P_u(x,t)| \leq  C (|x|^2 + |t|)^{\frac{k+\alpha}{2}}.
\end{equation}
Note that  such a  polynomial  corresponds to the weighted Taylor expansion of  order $k$  of $u$ at $(0,0)$ in which derivatives  in $t$ are assigned the weighted order $2$. If we  let $u_{r_0}$ be as in \eqref{d1}, then  with $Du(0,0)= e_n$ we  have that 
\begin{equation}
|u_{r_0}(x,t)  -x_n| \leq  \frac{r_{0}^2}{r_0}| P_1(r_0 x, r_{0}^2 t)| = r_0 | P_1(r_0 x, r_{0}^2 t)|,
\end{equation}
where $P_1(x,t)= P_u(x, t)- x_n$. Therefore,  by choosing $r_0$  sufficiently  small the second inequality in \eqref{h1} can be ensured with our new $u=u_{r_0}$ and $G=G^{r_0}$, following computations similar to those in Section \ref{r1}.

Let $P$ be a given parabolic polynomial of degree $k$. By using the fact that $\Delta u - u_t= 0$, we have  that
\begin{equation}\label{e19}
(\Delta - D_t)(uP)= 2 <Du,DP> + u (\Delta P   - P_{t}).
\end{equation}
As a first step we write a formula for \eqref{e19} when $P(x,t) =   x^m t^\ell$, for a  given  multi-index $m\in \mathbb N_0^n$ and $\ell\in \mathbb N_0$. This is \eqref{e20} below. To derive such formula we first notice that from \eqref{h1} it follows that $u= x_{n} + w$ where $w$ is of parabolic order $\geq 2$ and $||w||_{H^{k-1, \alpha}(G \cap Q_{2}(0,0))} \leq \delta$. On the other hand, \eqref{h100} gives
$u(x, t)= P_u(x, t)+ z(x, t)$, where $P_u$ is a polynomial of degree at most $k$. Combining these two facts, and letting $P_1(x,t) = P_u(x,t) - x_n$, we can thus write 
\begin{equation}\label{uxn}
u(x,t) = x_n + P_1(x,t) + z(x,t),
\end{equation}
where, we note explicitly, the polynomial $P_1$ is of degree at least two, provided it is nonzero. We thus have
\[
Du(x,t)= e_n + DP_{1}(x,t) + Dz(x,t).
\]
This gives
\begin{align*}
2<Du,DP> + u(\Delta P -P_t) =\ & 2 D_n P + x_n(\Delta P - P_t) + 2 <DP_1,DP>
\\
+\ & 2 <Dz,DP> + (P_1 + z)(\Delta P -P_t).
\end{align*}
Keeping in mind that 
\[
D_n P = m_n x^{m-\overline n} t^\ell,\ \ \ x_n D_{nn} P = m_n(m_n-1) x^{m-\overline n} t^\ell,
\]
we find
\begin{align*}
(\Delta - D_t)(uP)\ =\ & m_n (m_{n}+1)x^{m-\overline{n}} t^\ell + \sum_{i \neq n} m_i (m_i - 1) x^{m - 2 \overline{i} + \overline{n}} t^\ell  - \ell x^{m+ \overline{n}} t^{\ell-1}
\\
+\ & \big\{2 <DP_1,DP> + P_1(\Delta P -P_t)\big\} +  \big\{2 <Dz,DP> +  z (\Delta P -P_t)\big\}.
\end{align*}
It is now easy to see that
\[
2 <DP_1,DP> + P_1(\Delta P -P_t) = \sum_{0\le |q|+2\kappa\le |m|+2\ell + k - 2} c^{m ,\ell}_{q,\kappa} x^q t^\kappa.
\]
We note explicitly that, since $P_1$ is of degree at least $2$, one has that  $c^{m, \ell}_{q, \kappa} \neq 0$ only when $|m| + 2 \ell \leq |q| + 2 \kappa$. However, for later purposes we find it convenient to isolate from the right-hand side of the latter equation a parabolic polynomial which is of degree at most $k-1$, i.e., we decompose
\begin{align*}
\sum_{0\le |q|+2\kappa\le |m|+2\ell + k - 2} c^{m ,\ell}_{q,\kappa} x^q t^\kappa = & \sum_{|m| + 2\ell \leq |q| + 2\kappa \leq k-1} c^{m ,\ell}_{q,\kappa} x^q t^\kappa
\\
+ &  \sum_{k\le |q|+2\kappa\le |m|+2\ell + k - 2} c^{m ,\ell}_{q,\kappa} x^q t^\kappa. 
\end{align*}
If we define
\[
w_{m,\ell}(x,t) = 2 <Dz,DP> +  z (\Delta P -P_t) + \sum_{k\le |q|+2\kappa\le |m|+2\ell + k - 2} c^{m ,\ell}_{q,\kappa} x^q t^\kappa,
\]
then we conclude that
\begin{align}\label{e20}
(\Delta - D_t)(uP)\ =\ & m_n (m_{n}+1)x^{m-\overline{n}} t^\ell + \sum_{i \neq n} m_i (m_i - 1) x^{m - 2 \overline{i} + \overline{n}} t^\ell  - \ell x^{m+ \overline{n}} t^{\ell-1}
\\
 +\ & \sum_{|m| + 2\ell \leq |q| + 2\kappa \leq k-1} c^{m ,\ell}_{q,\kappa} x^q t^\kappa +  w_{m,\ell}(x,t),
\notag
\end{align}
where we have $|c^{m,\ell}_{q,\kappa}| \le C \delta$, a fact which follows from $||w||_{H^{k-1, \alpha}(G \cap Q_{2}(0,0))} \leq \delta$. 
Moreover,  again from \eqref{h1} we have for all $(x,t) \in G \cap Q_{1}(0,0)$  that
\begin{equation}\label{calc}
|w_{m,\ell}(x,t)| \leq C \delta ((|x|^2 + |t|)^{\frac{k-1+ \alpha}{2}}).
\end{equation}
In \eqref{calc} we have crucially used the fact that $|Dz(x,t)| \le C \delta (|x|^2 + |t|)^{\frac{k-1+ \alpha}{2}})$ for all $(x,t)\in G\cap Q_1(0,0)$.

We now turn to determining a suitable expression for the right-hand side in \eqref{e19} in the case in which 
\[
P(x,t) = \sum_{0\le |m|+2\ell\le k} a_{m,\ell} x^m t^\ell
\]
 is a general parabolic polynomial of degree $k$. In such case, we obtain from \eqref{e20} 
\begin{equation}\label{PRw}
(\Delta - D_t) (uP)= R(x) + w(x), 
\end{equation}
where $R$ is the parabolic polynomial of degree $k-1$ given by
\begin{align}\label{R}
R(x,t)=  \sum_{0\le |q| + 2\kappa \leq k-1} d_{q,\kappa} x^{q} t^{\kappa} & \ = \sum_{0\le |m|+2\ell\le k} a_{m,\ell} \bigg\{m_n (m_{n}+1)x^{m-\overline{n}} t^\ell + \sum_{i \neq n} m_i (m_i - 1) x^{m - 2 \overline{i} + \overline{n}} t^\ell
\\
 & - \ \ell x^{m+ \overline{n}} t^{\ell-1}
 + \sum_{|m| + 2\ell \leq |q| + 2\kappa \leq k-1} c^{m ,\ell}_{q,\kappa} x^q t^\kappa\bigg\},
\notag
\end{align}
and
\[
w(x)= \sum_{0\le |m|+2\ell\le k}  a_{m,\ell} w_{m,\ell} (x).
\]
Note that \eqref{R} gives
\begin{equation}\label{f5}
(q_n +1) (q_n +2) a_{q+ \overline{n}, \kappa} + \sum_{i \neq n} (q_i + 1) (q_i + 2) a_{q+ 2 \overline{i} - \overline{n}, \kappa}   - (\kappa+1) a_{q- \overline{n}, \kappa+1} + c^{m,\ell} _{q,\kappa}a_{m,\ell}= d_{q,\kappa}.
\end{equation}
Given a parabolic polynomial of degree $k-1$ such as $R(x)=  \sum_{0\le |q| + 2\kappa \leq k-1} d_{q,\kappa} x^{q} t^{\kappa}$, our objective is finding a parabolic polynomial of degree $k$, $P(x,t) = \sum_{0\le |m|+2\ell\le k} a_{m,\ell} x^m t^\ell$, such that  \eqref{PRw} hold (in particular, we will be interested in this section in the case when $R\equiv 0$, see Definition \ref{D:R} below). This will be possible if, given constants $d_{q,\kappa}$, we can solve the linear system \eqref{f5} above for the unknowns $a_{m,\ell}$.
Notice that one can think of \eqref{f5} as an equation where  $a_{q+ \overline{n}, \kappa}$ is a linear combination of $d_{q, \kappa}$'s and $a_{m,\ell}$'s such that either $|m| + 2\ell <|q| + 2\kappa+ 1$, or when $|m| + 2\ell = q + 2\kappa + 1 $, then $m_n < q_{n}+ 1$. It thus follows that we can solve \eqref{f5} if we arbitrarily assign all the coefficients $a_{m,\ell}$ when $m=(m_1,...,m_n)$ is a multi-index having $m_n = 0$. In this respect we emphasize that the crucial fact which makes this claim possible is that the third term in the left-hand side of \eqref{f5}, namely, the one which comes from differentiating in the time variable $t$, has the same degree as the first two terms. We define the order of a  coefficient  $a_{m,\ell}$  as  $|m| + 2\ell$, i.e., the weighted degree of the corresponding monomial $x^m t^\ell$.   

To verify the above claim we briefly describe the  procedure of determining  the coefficients. First of all, $a_{(0,...,0),0}$, which is the unique coefficient  of order $0$, is assigned arbitrarily since it trivially satisfies the requirement $m_n=0$. We proceed by a double induction. Suppose we know all coefficients  $a_{m,\ell}$  up to order  $p$. Given a coefficient $a_{m_0,\ell_0}$ of order $p+1$, i.e., $|m_0| + 2 \ell_0=p+1$, let $(m_0)_{n}$ denote  the entry  at the $n$-th  position of the corresponding multi-index $(m_0,\ell_0)$. Clearly, $0 \leq (m_0)_{n} \leq p+1$. We  determine  all coefficients  of order $p+1$  by induction on $(m_0)_n$ . First, all coefficients $a_{m_0,\ell_0}$ of order  $p+1$ with  $(m_0)_{n}=0$ are arbitrarily assigned. Suppose now all coefficients $a_{m_0,\ell_0}$  of order  $p+1$ with $(m_0)_n \leq \chi$ are known. Then,  given a coefficient $a_{m_1, \ell_1}$ of order $p+1$ with $(m_1)_n= \chi+1$,  we have from \eqref{f5} that such a  coefficient  is expressible in  terms of lower order coefficients $a_{m,\ell}$, which are  already  determined,  and coefficients $a_{m_0,\ell_0}$ of order  $p+1$  such that $(m_0)_{n} \leq \chi$ and  which are  supposed to be known by the induction hypothesis on $(m_0)_n$.  Therefore,  all coefficients $a_{m_0, \ell_0}$ of order $p+1$ with $(m_0)_{n}=\chi +1$ can be  determined once all coefficients $a_{m_0,\ell_0}$  up to order $p+1$ with $(m_0)_n \leq \chi$ are known.  In this manner, all coefficients up to order $p+1$ can be determined once all coefficients  of up to  order $p$ are known. The claim thus follows. 

For a  fixed  $k$ as in Theorem \ref{t10} we  now  consider parabolic polynomials of degree $\leq k$. We next introduce a notion which generalizes to the case $k\ge 2$ that in Definition \ref{D:app} above.

\begin{dfn}\label{D:R}
We say that $P(x,t) = \sum_{0\le |m|+2\ell\le k} a_{m,\ell} x^m t^\ell
$ is  an approximating parabolic polynomial of order $k$  for $\frac{v}{u}$ at $(0,0)$ if we have $R(x,t) \equiv 0$ in the representation \eqref{PRw} above. This is equivalent to the fact that the $a_{m,\ell}$ satisfy \eqref{f5} with $d_{q,\kappa} = 0$.
\end{dfn}

\begin{rmrk}\label{R:ap}
The motivation for Definition \ref{D:R} comes form the fact that, when $P$ is approximating, then from \eqref{PRw} we obtain $H(uP)=w$, with $w$ of weighted order $k-1+ \alpha$. This is crucially used in \eqref{e23} below since it allows us to cancel the term $r^{k-1+\alpha}$ from both sides of the equation. We have in fact, see \eqref{calc}, $w(x,t)= (|x|^2+ |t|)^{\frac{k-1+\alpha}{2}} w_1(x,t)$, where $||w_1(x,t)|| \leq \delta$. Note that when $k=1$, and therefore $P(x,t) = a_0 + \sum_{i=1}^n a_i x_i$, this notion is equivalent to saying that $x_n P$ is caloric, which is in turn equivalent to the condition $a_n=0$. In that case from \eqref{z1} we see $H(uP) \leq C \delta  r^\alpha$, which in the case $k=1$ is the $w$ of order $1-1+\alpha = \alpha$.
We also mention that in the case of the variable coefficient operators treated in Section \ref{S:vc} we will no longer impose the condition $R=0$ in the definition of approximating polynomial. This is so because of a nonzero right-hand side. 
\end{rmrk}

The proof of  Theorem \ref{t10}  follows the same lines as that of Theorem \ref{main1} once the following lemma is established.

\begin{lemma}\label{imp2}
Assume that for  some $r \leq 1 $ and $P$ an  approximating polynomial of order $k$ for $\frac{v}{u}$ at $(0,0)$   with $||P||\leq 1$, one has
\begin{equation}\label{i1enough}
||v - uP||_{L^{\infty}(G \cap Q_{r}(0,0))} \leq r^{k+1+ \alpha}.
\end{equation}
Then, there exists an  approximating polynomial  $\tilde P$ of order $k$ such that for some $C, \rho > 0$ universal, we have  
\begin{equation}\label{pptilde2}
 ||P-\tilde P||_{L^{\infty}(G \cap Q_{r}(0,0))} \leq C r^{k+\alpha},
\end{equation}
and
\begin{equation}\label{vutilde2}
||v - u\tilde P||_{L^{\infty}(G \cap Q_{\rho r}(0,0))} \leq (\rho r)^{k +1+ \alpha}.
\end{equation}
\end{lemma}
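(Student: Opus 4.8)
The plan is to follow the same compactness-and-iteration scheme used in the proof of Lemma \ref{imp1}, but now working with parabolic polynomials of order $k$ rather than affine functions. First I would rescale: set $\tilde G = \{(x/r,t/r^2) : (x,t)\in G\}$ and define $\tilde v$ on $\tilde G$ by the relation $v(x,t) = u(x,t)P(x) + r^{k+1+\alpha}\tilde v(x/r,t/r^2)$, together with $\tilde u(x,t) = u(rx,r^2t)/r$ as in \eqref{e2}. The hypothesis \eqref{i1enough} then immediately gives $\|\tilde v\|_{L^\infty(\tilde G \cap Q_1(0,0))}\le 1$, and $\tilde v$ vanishes on $\partial \tilde G \cap Q_1(0,0)$. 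The next step is to compute $(\Delta - D_t)\tilde v$: since $P$ is approximating (Definition \ref{D:R}), the representation \eqref{PRw} yields $(\Delta - D_t)(uP) = w$ with $w$ of weighted order $k-1+\alpha$ and $\|w_1\|\le C\delta$ as noted in Remark \ref{R:ap}. Rescaling this inhomogeneity — here one uses that a factor $(|x|^2+|t|)^{(k-1+\alpha)/2}$ rescales by $r^{k-1+\alpha}$ against the prefactor $r^{k+1+\alpha}$ divided by the two derivatives, producing $r^{\alpha}$ — one obtains $|\Delta \tilde v - \tilde v_t|\le C\delta$ in $\tilde G \cap Q_1(0,0)$. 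This is the parabolic analogue of \eqref{inhom} and is the one place where the hypothesis that $P$ be \emph{approximating} is essential, exactly as flagged in Remark \ref{R:ap}.

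Next I would run the compactness argument. Letting $\delta \to 0$ along a subsequence, uniform interior $H^{2+\alpha}$ estimates and boundary $H^{1+\alpha}$ estimates (actually one needs the higher boundary estimates here — uniform $H^{k+1+\alpha}$ interior estimates and $H^{k+\alpha}$ up to the flat boundary, using that $\|f_{r_0}\|_{H^{k+\alpha}}\le \delta\to 0$) give $\tilde v = \tilde v(\delta) \to v_0$ on compact subsets, where $v_0$ is caloric in $B_1^+\times(-1,0]$, bounded by $1$, and vanishes on $(\{x_n=0\}\cap B_1(0))\times(-1,0]$. Let $V_0$ be the odd reflection of $v_0$ in $x_n$; then $V_0$ is caloric in the full cylinder. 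From the Taylor expansion of $V_0$ at $(0,0)$ of weighted order $k+1$, keeping in mind that $t$ has weight two, and using that $V_0$ is odd in $x_n$ (so that in its Taylor polynomial every monomial $x^m t^\ell$ with $m_n$ even has zero coefficient — in particular the ``pure'' even-in-$x_n$ part vanishes), one extracts a parabolic polynomial $Q_0$ of degree $k$ such that
\begin{equation}\label{e4k}
\|v_0 - x_n Q_0\|_{L^\infty(B_\rho^+(0)\times[-\rho^2,0])} \le C\rho^{k+2} \le \tfrac14 \rho^{k+1+\alpha}
\end{equation}
for $\rho>0$ universal (e.g. $\rho\le (4C)^{-1/(1-\alpha)}$), with $\|Q_0\|\le C_1$ universal. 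The key algebraic point, established in the discussion preceding Definition \ref{D:R}, is that $x_n Q_0$ being caloric forces $Q_0$ to be \emph{approximating} of order $k$ — this is precisely the statement that one may prescribe freely all coefficients $a_{m,\ell}$ with $m_n=0$ and then solve the triangular system \eqref{f5} with $d_{q,\kappa}=0$; writing $V_0$'s Taylor polynomial as $x_n Q_0$ plus a caloric polynomial in the remaining variables and matching against \eqref{f5} identifies $Q_0$ as approximating.

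Finally I would close the loop exactly as in Lemma \ref{imp1}. By uniform convergence, for $\delta$ small we get $\|\tilde v - v_0\|_{L^\infty(\tilde G\cap Q_\rho(0,0))}\le \tfrac14\rho^{k+1+\alpha}$, and combining with \eqref{e4k} via the triangle inequality, $\|\tilde v - x_n Q_0\|_{L^\infty(\tilde G\cap Q_\rho)}\le \tfrac12\rho^{k+1+\alpha}$. From \eqref{h1} and the scaling we have $\|\tilde u - x_n\|_{L^\infty(\tilde G\cap Q_\rho)}\le \delta$; shrinking $\delta$ further in terms of $\rho$ (so that $C_1\delta$ times the number of coefficients is $\le \tfrac12\rho^{k+1+\alpha}$) yields $\|\tilde v - \tilde u Q_0\|_{L^\infty(\tilde G\cap Q_\rho)}\le \rho^{k+1+\alpha}$. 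Setting $\tilde P(x,t) = P(x,t) + r^{k+\alpha} Q_0(x/r, t/r^2)$ and rewriting $\tilde v,\tilde u$ in terms of $v,u$ gives \eqref{vutilde2}; note $\tilde P$ is again approximating since the approximating polynomials of order $k$ form a vector space and rescaling preserves the property. The bound \eqref{pptilde2} is immediate from $\|Q_0\|\le C_1$ and the definition of $\tilde P$. I expect the main obstacle to be the bookkeeping in the second step: verifying carefully that the Taylor polynomial of the odd reflection $V_0$ splits as $x_n Q_0$ with $Q_0$ an \emph{approximating} polynomial in the sense of Definition \ref{D:R} — i.e., that the caloric constraint on $x_n Q_0$ matches exactly the solvability structure of the system \eqref{f5} with $d_{q,\kappa}=0$ — and in tracking the correct power of $r$ when rescaling the inhomogeneity $w$ so that the residual equation for $\tilde v$ has a $\delta$-small, not merely bounded, right-hand side.
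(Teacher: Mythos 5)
Your proposal correctly reproduces the first part of the paper's argument: the rescaling, the computation that $|\Delta\tilde v - \tilde v_t| \le C\delta$ using that $P$ is approximating, the compactness giving a caloric limit $v_0$, the odd reflection and weighted Taylor expansion producing a degree-$k$ polynomial $Q_0$ with $x_nQ_0$ caloric and $\|v_0 - x_nQ_0\|_{L^\infty(B_\rho^+\times(-\rho^2,0])}\le \tfrac14\rho^{k+1+\alpha}$, and the resulting bound $\|v - u(P + r^{k+\alpha}Q_0(\tfrac{\cdot}{r},\tfrac{\cdot}{r^2}))\|_{L^\infty}\le \tfrac34(\rho r)^{k+1+\alpha}$ as in \eqref{f11}. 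The gap is in your last step, where you assert that $\tilde P = P + r^{k+\alpha}Q_0(\tfrac{\cdot}{r},\tfrac{\cdot}{r^2})$ is ``again approximating since the approximating polynomials of order $k$ form a vector space and rescaling preserves the property.'' Neither half of that justification holds here, and the paper devotes the entire second half of the proof precisely to repairing this point.

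The notion ``approximating for $\tfrac{v}{u}$'' in Definition \ref{D:R} is relative to the given $u$: the defining linear system is \eqref{f5} with $d_{q,\kappa}=0$, and the coefficients $c^{m,\ell}_{q,\kappa}$ appearing there come from the higher-order Taylor part $P_1$ of $u$ via \eqref{uxn}--\eqref{e20}. The polynomial $Q_0$ you extract from the limit $v_0$ instead satisfies the clean system \eqref{f13}, i.e. it is approximating for $\tfrac{v_0}{x_n}$ (equivalently, $x_nQ_0$ is caloric) — not for $\tfrac{v}{u}$. Moreover, rescaling does \emph{not} preserve the approximating property: as the paper computes, requiring $P(x,t)+r^{k+\alpha}Q(\tfrac{x}{r},\tfrac{t}{r^2})$ to be approximating for $\tfrac{v}{u}$, given that $P$ already is, is equivalent to the $r$-dependent system \eqref{f12} for the coefficients of $Q$, in which the structure constants are $\tilde c^{m,\ell}_{q,\kappa}=r^{|q|+2\kappa+1-(|m|+2\ell)}c^{m,\ell}_{q,\kappa}$. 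So $Q_0$ is the wrong polynomial. The missing step is to replace $Q_0$ by a perturbation $\tilde Q$: keep $\tilde b_{m,\ell}=b_{m,\ell}$ when $m_n=0$, determine the remaining coefficients by the triangular double-induction procedure described after \eqref{f5} so that \eqref{f12} holds, observe by subtracting \eqref{f13} from \eqref{f12} that $\tilde Q - Q_0$ solves a triangular system with data of size $C\delta$ (hence $\|\tilde Q-Q_0\|\le C\delta$), and then check that swapping $Q_0$ for $\tilde Q$ only costs an extra $C\delta$ in the estimate, absorbed by shrinking $\delta$. Without this modification the $\tilde P$ you produce is not approximating, and the iteration in the proof of Theorem \ref{t10} — which re-applies Lemma \ref{imp2} at the next scale and therefore needs $\tilde P$ approximating — does not go through.
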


\begin{proof}
With $\tilde G$ as in the proof of Lemma \ref{imp1}, we define $\tilde v(x,t)$ by the equation
\begin{equation}\label{e21}
v(x,t)= u(x, t) P(x,t) + r^{k + 1 +\alpha} \tilde v (\frac{x}{r},\frac{t}{r^2}),
\end{equation}
and we let 
\begin{equation}\label{tildeub}
\tilde u(x,t) = \frac{u(rx,r^2 t)}{r}.
\end{equation}
Since $v$ is a solution of the heat equation we have 
\begin{equation*}
0=\Delta v - v_t = (\Delta - D_t)(uP) + r^{k-1+\alpha}(\Delta \tilde v  - D_t\tilde v)(\frac{x}{r},\frac{t}{r^2}).
\end{equation*}
Using  the fact that $P$ is an approximating polynomial of order $k$ and \eqref{PRw}, we conclude
\begin{equation}\label{e23}
r^{k-1+ \alpha} (\Delta \tilde v  - D_t \tilde v)(\frac{x}{r},\frac{t}{r^2})  =  - w(x,t).
\end{equation}
We emphasize the crucial role played in the latter equation by the property of $P$ being an approximating polynomial, see Remark \ref{R:ap} above. By \eqref{calc} we conclude that in $\tilde G\cap Q_1(0,0)$ one has
\begin{equation*}
(\Delta - D_t)\tilde v=h,
\end{equation*}
where 
\begin{equation*}
h \in H^{k-2+\alpha}(\tilde G\cap Q_1(0,0)) \  \ \ \ \text{and}\ \ \ \ |h| \leq  C\delta.
\end{equation*}

In addition, $\tilde v$ vanishes on $\partial \tilde G \cap Q_{1}(0,0)$. Therefore, if we let $\delta \to 0$ along a sequence, we will have by compactness (by using uniform  interior $H^{k+1+ \alpha}$ estimates and  boundary $H^{k+\alpha}$  estimates)  that for a subsequence  $\delta \to 0$, we have $\tilde v = \tilde v(\delta)  \to v_0$ uniformly on compact subsets, where the limit function $v_0$ has the properties
\begin{equation}\label{s1}
\begin{cases}
\Delta v_0 - D_t v_0 = 0\ \ \text{in}\ B_{1}^{+} \times (-1, 0],
\\
||v_0||_{L^{\infty}} \leq 1,
\\
v_0 = 0 \ \text{on}\ (\{x_n=0\} \cap B_{1}(0)) \times (-1,0].
\end{cases}
\end{equation}

Proceeding as in the proof of Lemma  \ref{imp1} we can find  a polynomial  $Q_0$ of degree $k$ such that $x_n Q_0$ solves the heat equation, and 
\begin{equation}\label{z10bis}
|| v_0 - x_n Q_0||_{L^{\infty}(B_{\rho}^{+} \times (-\rho^2, 0])} \leq C \rho^{k+2} \leq \frac{1}{4} \rho^{k+1 + \alpha},
\end{equation}
for some $C, \rho > 0$ universal.  This follows  for instance  by odd reflection of  $v_{0}$  across $x_n =0$  (which provides again a solution to the heat equation), and by applying Lemma 1.1 in \cite{AV} to the extended function. Then, again for  $\delta>0$ sufficiently small, we have
\begin{equation}\label{z11}
||\tilde v - v_0||_{L^{\infty}(\tilde G \cap Q_{\rho}(0,0))} \le  \frac{1}{4} \rho^{k+1+ \alpha}.
\end{equation}
Therefore, from \eqref{z10bis}, \eqref{z11} and the triangle inequality we obtain,
\begin{equation}\label{f10}
||\tilde v- x_n Q_0||_{L^{\infty}(\tilde G \cap Q_{\rho}(0,0))} \leq\frac{1}{2} \rho^{k+1 + \alpha}.
\end{equation}
Now, by using the second inequality in \eqref{h1} we conclude that, if the choice of  $\delta$ is further  restricted in such a way that $C \delta \leq \frac{1}{4} \rho^{k+1+\alpha}$, for some  $C$ which depends on $||Q_0||$ and hence is universal, then 
\begin{equation*}
||\tilde v- \tilde u Q_0||_{L^{\infty}(\tilde G \cap Q_{\rho}(0,0))} \leq \frac{3}{4} \rho^{k+1 + \alpha}.
\end{equation*}
Therefore,  by rewriting $\tilde v$ and $\tilde u$ in terms of $u$  and $v$ we have that
\begin{equation}\label{f11}
||v -  u \big(P + r^{k+ \alpha} Q_0 (\frac{.}{r},\frac{.}{r^2})\big)||_{L^{\infty}(\tilde G \cap Q_{\rho r}(0,0))}  \leq \frac{3}{4}(\rho r)^{k+1 + \alpha}.
\end{equation}
At this point, \eqref{f11} would complete the proof of the lemma if we knew that the parabolic polynomial  $P + r^{k+ \alpha} Q_0(\frac .r, \frac{.}{r^2})$ is approximating for $\frac{v}{u}$. Unfortunately, this is not necessarily the case and we need to further modify $Q_0$ to some other polynomial $\tilde Q$, without essentially modifying the estimate \eqref{f11}. 

Let us write $Q_0(x,t) = \sum_{0\le |q|+2\kappa\le k} b_{q,\kappa} x^q t^\kappa$ and $\tilde Q(x,t) = \sum_{0\le |q|+2\kappa\le k} \tilde b_{q,\kappa} x^q t^\kappa$. In view of \eqref{f5}, and  by using the fact that $P(x,t) = \sum_{0\le |m|+2\ell\le k} a_{m,\ell} x^m t^\ell$ is an approximating  polynomial  for $\frac{v}{u}$ of degree $k$, we obtain that, in order for 
\begin{equation}\label{tildepbis}
\tilde P(x,t) \overset{def}{=} P(x,t) + r^{k+\alpha} \tilde Q(\frac{x}{r},\frac{t}{r^2})
\end{equation}
 to be an approximating  polynomial  for $\frac{v}{u}$ of degree $k$, the coefficients $\tilde b_{q, \kappa}$ of  $\tilde Q$ should satisfy
\begin{equation}\label{f12}
(q_n +1) (q_n +2) \tilde b_{q+ \overline{n}, \kappa}  + \sum_{i \neq n} (q_i + 1) (q_i + 2) \tilde b_{q+ 2 \overline{i} - \overline n, \kappa} - (\kappa +1) \tilde b_{q- \overline{n}, \kappa+1} + \tilde c^{m,\ell} _{q,\kappa} \tilde b_{m,\ell} = 0,
\end{equation}
where 
\begin{equation*}
\tilde c^{m,\ell}_{q,\kappa} = r^{|q|+ 2 \kappa + 1 - (|m| + 2\ell)} c^{m,\ell}_{q, \kappa}.
\end{equation*}
The justification for \eqref{f12} is as follows. If we write  $P(x,t) + r^{k+\alpha} \tilde Q(\frac{x}{r}, \frac{t}{r^2}) = \sum_{0\le |q|+2\kappa\le k} \tilde a_{q,\kappa} x^q t^\kappa$, then it is clear that 
\begin{equation}\label{tildea}
\tilde a_{q,\kappa} = a_{q,\kappa} + r^{k+\alpha - |q| - 2\kappa} \tilde b_{q,\kappa}
\end{equation}
Now, imposing that $P(x,t) + r^{k+\alpha} \tilde Q(\frac{x}{r}, \frac{t}{r^2})$ be approximating of order $k$ for  $\frac{v}{u}$ at $(0,0)$ is, in view of \eqref{f5} and Definition \ref{D:R}, equivalent to
\begin{equation}\label{f55}
(q_n +1) (q_n +2) \tilde a_{q+ \overline{n}, \kappa} + \sum_{i \neq n} (q_i + 1) (q_i + 2) \tilde a_{q+ 2 \overline{i} - \overline{n}, \kappa}   - (\kappa+1) \tilde a_{q- \overline{n}, \kappa+1} + c^{m,\ell} _{q,\kappa} \tilde a_{m,\ell}= 0.
\end{equation}
Replacing \eqref{tildea} in \eqref{f55}, and using the fact that $P$ is itself approximating for $\frac{v}{u}$ (see \eqref{f5}), after some elementary computations we recognize that the coefficients $\tilde b_{q,\kappa}$  must  satisfy
\begin{align*}
&\ r^{k+\alpha}\bigg[r^{- (|q| + 2\kappa + 1)}(q_n +1) (q_n +2) \tilde b_{q+ \overline{n}, \kappa}
 + r^{- (|q| + 2\kappa + 1)}\sum_{i \neq n} (q_i + 1) (q_i + 2) \tilde b_{q+ 2 \overline{i} - n,\kappa} \\
 - &\  r^{- (|q| + 2\kappa + 1)}(\kappa+1) \tilde b_{q- \overline{n}, \kappa+1} + r^{-(|m|+ 2\ell)} c^{m,\ell} _{q,\kappa} \tilde b_{m,\ell}\bigg] = 0.
\end{align*}

Eliminating $r^{k+\alpha}$ and then multiplying the resulting equation by
$r^{|q| + 2\kappa +1}$ we finally obtain  \eqref{f12}. Recalling that $0<r<1$, and that from the defintion of $d_{q, \kappa}$  in \eqref{R} one has $c^{m,\ell}_{q,\kappa} \neq 0$ only when $ |m| + 2\ell \leq |q| + 2\kappa\le k-1$,  we conclude that 
\[
|\tilde c^{m,\ell}_{q,\kappa}| = r^{|q|+ 2 \kappa + 1 - (|m| + 2\ell)} |c^{m,\ell}_{q, \kappa}|  \le r |c^{m,\ell}_{q, \kappa}| \leq C \delta.
\]
Since \eqref{z10bis} above implies that $Q_0$ is approximating for $\frac{v_0}{x_n}$, then its coefficients $b_{m,\ell}$ must satisfy
\begin{equation}\label{f13}
(q_n +1) (q_n +2)  b_{q+ \overline{n},\kappa}  + \sum_{i \neq n} (q_i + 1) (q_i + 2)  b_{q+ 2 \overline{i} - \overline n,\kappa} - (\kappa+1) b_{q- \overline{n}, \kappa+1} = 0.
\end{equation}
If we now subtract \eqref{f13} from \eqref{f12}, we find that the coefficients of $\tilde Q - Q_0$ solve a linear system with left-hand side bounded by $C\delta$ and contains  unknown  coefficients $\tilde b_{m,\ell}$  such that $|m| + 2\ell$ is less than the sum of the indices  of  the  coefficients  of $\tilde Q - Q_0$  appearing in the right-hand side. We in fact have
\begin{align}
-\tilde c^{m,\ell} _{q,\kappa} \tilde b_{m,\ell} & =   (q_n +1) (q_n +2)( \tilde b_{q+ \overline{n},\kappa}-  b_{q+ \overline{n},\kappa}) 
\\
& + \sum_{i \neq n} (q_i + 1) (q_i + 2)(\tilde b_ {q+ 2 \overline{i} - \overline n,\kappa}-  b_{q+ 2 \overline{i} - \overline n,\kappa})   - (\kappa+1)( \tilde b_{q- \overline{n}, \kappa+1} - b_{q- \overline{n}, \kappa+1}).
\notag
\end{align}

The reader should note that the order of the coefficient of any term in the right hand side of the latter equation is $|q|+ 2 \kappa +1 > |m| + 2 \ell$. Again, this is so since $c^{m,\ell}_{q,\kappa} \neq 0$ precisely when $ |m| + 2\ell \leq |q| + 2\kappa$. Consequently, if  we  set  $\tilde b_{m,\ell}=b_{m, \ell}$ when $m_n=0$, then  in view of the procedure  described  after \eqref{f5}, we  can  determine all the other coefficients  $\tilde b_{q, \kappa}$ of $\tilde Q$ by induction on the order of the coefficient $|q| + 2 \kappa$.  Morever, since the coefficients of $\tilde Q-Q_0$ solve a  linear system with  left-hand side  bounded by $C\delta$, we can  further ensure that 
\begin{equation*}
||\tilde Q -  Q_0||_{L^{\infty}(Q_{1}(0,0))} \leq C \delta.
\end{equation*}
Since
\[
||\tilde Q||_{L^{\infty}(\tilde G \cap Q_{\rho}(0,0))} \le ||\tilde Q - Q_0||_{L^{\infty}(\tilde G \cap Q_{\rho}(0,0))} + ||Q||_{L^{\infty}(\tilde G \cap Q_{\rho}(0,0))},
\]
we conclude that there exists a universal constant $C>0$ such that
\[
||\tilde Q||_{L^{\infty}(\tilde G \cap Q_{\rho}(0,0))} \le C.
\]
Therefore, by \eqref{f10} and by choosing a smaller  $\delta$  if needed, one can ensure that
\begin{align}\label{f100}
||\tilde v- x_n \tilde Q||_{L^{\infty}(\tilde G \cap Q_{\rho}(0,0))} & \le  \ ||\tilde v- x_n Q_0||_{L^{\infty}(\tilde G \cap Q_{\rho}(0,0))}  + ||x_n (\tilde Q - Q_0)||_{L^{\infty}(\tilde G \cap Q_{\rho}(0,0))} 
\\
& \leq\ \frac{1}{2} \rho^{k+1 + \alpha} + C \delta \rho \le \frac{3}{4} \rho^{k+1 + \alpha}.
\notag
\end{align}
Then, again  by the second inequality  in \eqref{h1} and by \eqref{f100}, we obtain for a  smaller choice of $\delta$ if needed that
\begin{align}\label{f101}
||\tilde v- \tilde u \tilde Q||_{L^{\infty}(\tilde G \cap Q_{\rho}(0,0))} \le\ & ||\tilde v- x_n \tilde Q||_{L^{\infty}(\tilde G \cap Q_{\rho}(0,0))} + ||\tilde Q (\tilde u- x_n)||_{L^{\infty}(\tilde G \cap Q_{\rho}(0,0))}
\\
  \leq & \ \frac{3}{4} \rho^{k+1 + \alpha} + C \delta ||\tilde Q||_{L^{\infty}(\tilde G \cap Q_{\rho}(0,0))} \le \frac{3}{4} \rho^{k+1 + \alpha} + C \delta \le \rho^{k+1 + \alpha}.
\notag
\end{align}

Recalling the definitions \eqref{e21}, \eqref{tildeub} and \eqref{tildepbis}, the conclusion \eqref{vutilde2} of the lemma is now obtained as follows 
\begin{align*}
||v - u\tilde P||_{L^{\infty}(G \cap Q_{\rho r}(0,0))} = & \ ||uP + r^{k+1+\alpha} \tilde v(\frac{.}{r},\frac{.}{r^2}) - u(P + r^{k+\alpha} u \tilde Q(\frac{.}{r},\frac{.}{r^2})||_{L^{\infty}(G \cap Q_{\rho r}(0,0))}
\\
= &\ r^{k +1+ \alpha} ||\tilde v(\frac{.}{r},\frac{.}{r^2}) - \tilde u(\frac{.}{r},\frac{.}{r^2})\tilde Q(\frac{.}{r},\frac{.}{r^2})||_{L^{\infty}(G \cap Q_{\rho r}(0,0))}
\\
= &\ r^{k +1+ \alpha} ||\tilde v - \tilde u \tilde Q||_{L^{\infty}(G \cap Q_{\rho}(0,0))}\ \le (\rho r)^{k +1+ \alpha}, 
\end{align*}
where in the last inequality we have used \eqref{f101}. This completes the proof of the lemma.

\end{proof}

\medskip

\begin{proof}[Proof of  Theorem \ref{t10}]
The proof of the theorem now follows by iterating Lemma \ref{imp2}. To start  the process of  iteration,  we  take  $P= 0$. Multiplying $ v$ by a suitable constant, one can ensure  that  the hypothesis of Lemma  \ref{imp2} holds  for some $r_{0}$ universal. The rest of the proof remains the same as in the case $k=1$, in which Lemma \ref{imp2} is applied iteratively first for $r_0$, then for $\rho r_0, \rho^2 r_0$, and so on. We finally obtain a limiting polynomial $P_0$ of degree at most $k$ having the following representation
\[
P_0(x,t)= \sum_{i=1}^{\infty } (\rho^{i-1} r_0)^{k+\alpha} \tilde Q_{i}(\frac{x}{\rho^{i-1}r_0}, \frac{t}{(\rho^{i-1}r_0)^2}),
\]
where $\tilde Q_i$ is the  polynomial obtained  after the $i$-th application of Lemma \ref{imp2}. Furthermore, the  following holds  
\begin{equation}\label{interm1}
||v-uP_0||_{L^{\infty}(G \cap Q_{r}(0,0))} \leq Cr^{k+1+ \alpha},\ \ \ \ \ \ \ r\leq r_0.
\end{equation}
For $r>0$ we now consider the functions $\tilde v,  \tilde u$ defined, with $P=P_0$, as in the proof of Lemma \ref{imp2}. Given that \eqref{interm1} holds for any given $r\leq r_0$, we  can argue as in the case $k=1$ and obtain that  
\begin{equation}\label{tildes1}
|\tilde v| \leq C\tilde u\ \ \ \ \ \text{in}\  \tilde G \cap  Q_{1/2}(0,0).
\end{equation}
By rewriting $\tilde u, \tilde  v$ in terms of $u$ and $v$, from  \eqref{tildes1} we  obtain for $(x,t) \in G \cap Q_{r/2}(0, 0)$
\begin{equation}\label{tildes2}
|v(x,t) - u(x,t) P_0(x,t)| \leq C u(x,t) r^{k+\alpha}.
\end{equation}
The inequality \eqref{tildes2} gives for $(x,t) \in G \cap Q_{1}(0,0)$
\[
|\frac{v}{u}(x,t) - P_0(x,t)| \leq C (|x|^2+ |t|)^{\frac{k+\alpha}{2}}.
\]
This implies $H^{k+\alpha}$-regularity at the boundary point $(0,0)$. Finally, by arguing as in the case $k=1$, the identity
\begin{equation}\label{z25}
\frac{v(x,t)}{u(x,t)}= P_0(x,t) + r^{k+\alpha} \frac{\tilde v(\frac{x}{r},\frac{t}{r^2})}{\tilde u(\frac{x}{r},\frac{t}{r^2})},
\end{equation}
implies  that the $H^{k+\alpha}$ norm of $\frac{v}{u}$ is bounded in regions of the form $G \cap Q_{r/4}(\frac{r}{2}e_n,0)$, i.e., in regions which are  away from the boundary of $G$ by a  parabolic distance proportional to $r$. At this point, the conclusion follows similarly to the case $k=1$ by arguing as in the proof of Proposition 4.13 in \cite{CC}.

\end{proof}

\section{variable coefficients}\label{S:vc}

In this section we intend to extend Theorem \ref{main1} and Theorem \ref{t10} to variable coefficient parabolic operators of the type
\begin{equation}\label{g1}
Lu = \sum_{i,j=1}^n a_{ij}(x,t) D_{ij} u + \sum_{i=1}^n b_i(x,t) D_i u +c(x,t)u - u_t  = 0,
\end{equation}
satisfying appropriate regularity assumptions on the coefficients, and to strong solutions $u\in W^{2,1}_{p,loc}(G)$. Henceforth, we use the notation Tr$(M)$ for the trace of a matrix $M$. We can thus write $\sum_{i,j=1}^n a_{ij}(x,t) D_{ij} u = \operatorname{Tr}(A(x,t)D^2u)$, where we have indicated with $D^2 u = [D_{ij}u]$ the Hessian matrix of $u$.

As in the case of the heat equation, in order to better present the ideas we first treat the case $k=1$ in Section \ref{SS:k=1}. Subsequently, in Section \ref{SS:kbigger2}, we treat the case of $k>1$. In the sequel we will denote with $W^{2,1}_{p}(G)$ the parabolic Sobolev spaces. For their precise  definition we refer the reader to p.155 in \cite{Li}. We will indicate with $W^{2,1}_{p,loc}(G)$ the standard local spaces.

\subsection{\textbf{$H^{1+\alpha}$ regularity}}\label{SS:k=1}

The assumptions on $G$ are as in the hypothesis of Theorem  \ref{main1}. In this section we assume that $A = [a_{ij}]\in H^{ \alpha}(G)$, $b, c \in L^{\infty}(G)$. The following is our main result.

\begin{thrm}\label{t1}
Let $p>1$ and suppose that $u\in W^{2,1}_{p, loc}(G \cap Q_{2}(0,0))$ be a positive strong solution to \eqref{g1} above.
Let $v\in W^{2,1}_{p}(G\cap Q_{2}(0,0))$ be a strong solution to 
\begin{equation}
Lv=g,\  \text{in}\ G, 
\end{equation}
where $g\in H^{\alpha}(\overline{G\cap Q_{2}(0,0)})$. Assume that  $u, v$  vanish on $\partial_{p} G \cap Q_{2}(0,0)$.  Furthermore, let  $u$  satisfy the normalization condition \eqref{assump}. Then, one has
\begin{equation}
||\frac{v}{u}||_{H^{1+\alpha}(G \cap Q_{1}(0,0))} \leq C (||v||_{L^{\infty}(G\cap Q_{2}(0,0))}  + ||g||_{H^\alpha(G\cap Q_{2}(0,0))} + 1),
\end{equation}
for some $C>0$ universal.
\end{thrm}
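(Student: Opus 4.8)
The plan is to reduce Theorem \ref{t1} to the analogue of Lemma \ref{imp1} for the variable coefficient operator $L$, following closely the structure of the proof of Theorem \ref{main1}. First I would carry out the same preliminary reductions as in Section \ref{r1}: by $W^{2,1}_p$ Schauder-type estimates (Chapter 7 in \cite{Li}) together with Sobolev embedding, $u$ and $v$ are $H^{1+\alpha}$ up to $\p_p G\cap Q_{3/2}(0,0)$; by the Hopf lemma (Theorem 3' in \cite{LN}) applied to the positive solution $u$, one gets $u_\nu\ge c>0$ on $\p_p G\cap Q_1(0,0)$. Then one rescales via \eqref{d1}, noting that the rescaled coefficients $A_{r_0}(x,t)=A(r_0x,r_0^2t)$ have $[A_{r_0}]_{H^\alpha}\le r_0^\alpha [A]_{H^\alpha}\to 0$, the lower-order terms $b_{r_0},c_{r_0}$ pick up positive powers of $r_0$ and become small in $L^\infty$, and the right-hand side $g$ rescales to $r_0 g(r_0x,r_0^2t)$ which is small in $H^\alpha$. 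So for a given $\delta>0$ we may assume $A(0,0)=I$ (after a linear change of coordinates, harmless since it preserves the class of equations and only distorts the domain boundedly — one should absorb this into the flatness normalization), $[A]_{H^\alpha}\le\delta$, $\|b\|_\infty+\|c\|_\infty\le\delta$, $\|g\|_{H^\alpha}\le\delta$, $\|f_{r_0}\|_{C^{1,\alpha}}\le\delta$ and $D_xu(0,0)=e_n$, exactly as in \eqref{udelta}--\eqref{assump2}.

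Next I would prove the variable-coefficient version of Lemma \ref{imp1}. The notion of approximating affine function must be adjusted: rather than requiring $x_nP$ to be $L$-caloric, one only asks that $L(uP)$ be of lower order, i.e. $P(x)=\sum a_ix_i+a_0$ with $a_n=0$ (as flagged in the last sentence of Remark \ref{R:ap}, one no longer imposes $R=0$). Under the hypothesis $\|v-uP\|_{L^\infty(G\cap Q_r)}\le r^{2+\alpha}$, define $\tilde v$ by $v(x,t)=u(x,t)P(x)+r^{2+\alpha}\tilde v(x/r,t/r^2)$ on $\tilde G$. Applying $L$ and using $Lu=0$, $Lv=g$ together with \eqref{g1} gives an equation of the form
\begin{equation*}
r^\alpha (L_r\tilde v)(x/r,t/r^2) = -2\operatorname{Tr}(A(x,t)\,DP\otimes D_xu) - (\text{lower order in }P) - g(x,t),
\end{equation*}
where $L_r$ is $L$ written in the rescaled variables. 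By \eqref{z1}-type bounds $|D_iu|\le\delta r^\alpha$ for $i<n$ on $G\cap Q_r$, plus $\|b\|_\infty,\|c\|_\infty\le\delta$ and $\|g\|_{H^\alpha}\le\delta$, the right side is $O(\delta r^\alpha)$ in the appropriate sense, so $|L_r\tilde v|\le C\delta$ on $\tilde G\cap Q_1$; also $\|\tilde v\|_{L^\infty(\tilde G\cap Q_1)}\le 1$ and $\tilde v=0$ on $\p\tilde G\cap Q_1$. Letting $\delta\to 0$ along a sequence, the coefficients $A_{r_0}\to A(0,0)=I$ uniformly on compact sets (since $[A]_{H^\alpha}\le\delta$), so $L_r\to\Delta-D_t$ and by compactness (uniform interior $W^{2,1}_p$/$H^{2+\alpha}$ and boundary $H^{1+\alpha}$ estimates) $\tilde v\to v_0$ solving the flat problem \eqref{s}. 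From here the argument is identical to Lemma \ref{imp1}: odd-reflect $v_0$, Taylor expand to get an approximating affine $Q_0$ with $x_nQ_0$ harmonic, $|q_i|\le C_1$, then use the closeness $\|\tilde v-v_0\|_{L^\infty(\tilde G\cap Q_\rho)}\le\frac14\rho^{2+\alpha}$ and $\|\tilde u-x_n\|_{L^\infty(\tilde G\cap Q_\rho)}\le\delta$ to deduce $\|\tilde v-\tilde uQ_0\|_{L^\infty(\tilde G\cap Q_\rho)}\le\rho^{2+\alpha}$, giving $\tilde P(x)=P(x)+r^{1+\alpha}Q_0(x/r)$. One must double-check that $\tilde P$ is still an approximating affine function in the new sense, which here just means its $x_n$-coefficient is zero — true since $(Q_0)_n=0$.

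With the variable-coefficient Lemma in hand, the proof of Theorem \ref{t1} concludes exactly as that of Theorem \ref{main1}: $H^{1+\alpha}$ (equivalently $W^{2,1}_p$ plus embedding) estimates up to the boundary give $|\tilde v|\le C_1 d(x,\tilde G_t)$, while $\tilde u_\nu\ge c>0$ gives $\tilde u\ge C_2 d(x,\tilde G_t)$, hence $|\tilde v|\le C\tilde u$ on $\tilde G\cap Q_{1/2}$; translating back yields $|v-uP_0|\le Cur^{1+\alpha}$ on $G\cap Q_{r/2}$ for the limiting affine $P_0$ produced by iterating the Lemma (starting from $P=0$ after multiplying $v$ by a constant, choosing $k\in\mathbb N$ with $\rho^{k+1}r_0\le r<\rho^kr_0$). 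Combining the pointwise $H^{1+\alpha}$ bound $|v/u-P_0|\le C(|x|^2+|t|)^{(1+\alpha)/2}$ at $(0,0)$ with the interior estimate \eqref{voveru} on the cylinders $G\cap Q_{r/4}(\frac r2e_n,0)$ — which come from $W^{2,1}_p$ interior estimates for $\tilde v/\tilde u$ and the identity \eqref{e25} — and arguing as in Proposition 4.13 of \cite{CC} (parabolic distance replacing Euclidean) gives \eqref{descon} with the extra $\|g\|_{H^\alpha}$ term on the right.

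The main obstacle I expect is bookkeeping the inhomogeneities carefully through the rescaling and the blow-up: one must verify that every extra term produced by the variable coefficients $a_{ij}$, the lower-order coefficients $b_i,c$, and the right-hand side $g$ carries enough powers of $r$ (or of $\delta$) to be absorbed, so that in the limit $\delta\to 0$ one genuinely recovers the homogeneous heat equation on the half-cylinder \eqref{s}. In particular the freezing $A(0,0)=I$ requires either an initial linear change of variables (which must be tracked so that the $C^{1,\alpha}$ flatness of $\p G$ and the normalization $D_xu(0,0)=e_n$ survive) or carrying $A(0,0)$ through as a constant elliptic matrix and reflecting across $\{x_n=0\}$ using the conormal rather than the Euclidean normal; both are routine but must be done consistently. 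The second delicate point, already anticipated in Remark \ref{R:ap}, is that with $g\not\equiv 0$ and variable $A$ the product $uP$ no longer solves the equation even approximately unless $P$ is suitably chosen, so the definition of ``approximating'' must be the weaker ``$L(uP)$ is of lower parabolic order'' — and one has to confirm that the correction $\tilde P$ stays in this class, which for $k=1$ is immediate but will be the genuinely new linear-algebra input in the $k\ge 2$ version of Section \ref{SS:kbigger2}.
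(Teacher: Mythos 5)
There is a genuine gap in your proposed definition of ``approximating affine function,'' and it is exactly the point where the inhomogeneity $g$ plays its role. You set $a_n=0$, as in the heat-equation case, claiming this captures ``$L(uP)$ of lower order.'' It does not. Since $Lu=0$ and $P$ is affine, one has
\begin{equation*}
L(uP)= 2\langle ADu,DP\rangle + u\langle b,DP\rangle,
\end{equation*}
and near $(0,0)$, using $A(0,0)=I$ and $D_xu(0,0)=e_n$, the leading term is $2a_n + O(\delta r^\alpha)$. On the other hand $g(x,t)=g(0,0)+O(\delta r^\alpha)$. Thus $L(uP)-g = 2a_n - g(0,0) + O(\delta r^\alpha)$, and dividing by $r^\alpha$ to bound $\tilde L\tilde v$ forces $2a_n=g(0,0)$, not $a_n=0$. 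With $a_n=0$ and $g(0,0)\neq 0$ you would be left with $|\tilde L\tilde v|\sim |g(0,0)|\,r^{-\alpha}$, which blows up as $r\to 0$, destroying the compactness step. The paper's Definition 4.3 is precisely $2a_n=g(0,0)$, and the iteration is seeded with $P(x)=\tfrac{g(0,0)}{2}x_n$, not with $P=0$; multiplying $v$ by a constant does not help since $g$ scales along with $v$. Your observation that the correction $Q_0$ has $(Q_0)_n=0$ is correct — that is what guarantees $\tilde P$ stays in the approximating class — but the class itself is the shifted affine plane $\{2a_n=g(0,0)\}$, not the coordinate plane $\{a_n=0\}$.

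A secondary, smaller point: in the compactness step you invoke ``uniform interior $W^{2,1}_p$/$H^{2+\alpha}$'' estimates for $\tilde v$. With coefficients only in $H^\alpha$ (and merely $L^\infty$ lower-order terms) uniform interior $H^{2+\alpha}$ estimates are not available; the paper instead uses uniform $H^{1+\alpha}$ boundary estimates together with the stability of $L^p$ viscosity solutions (Theorem 6.1 in \cite{CKS}) to identify the limit $v_0$ as a classical solution of the heat equation. Your $W^{2,1}_p$ route could plausibly be made to work, but as written it is not justified, and the $H^{2+\alpha}$ alternative you list alongside it is false at this level of coefficient regularity. Once both points are repaired, the rest of your outline — rescaling so that $[A]_{H^\alpha}, \|b\|_\infty, \|c\|_\infty, \|g\|_{H^\alpha}\le\delta$, the Hopf lemma lower bound, the comparison $|\tilde v|\le C\tilde u$, the iteration and the patching argument à la Proposition 4.13 in \cite{CC} — does match the paper's proof.
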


\begin{rmrk}
We first note that  from the assumptions on the coefficients, the Calder\'on- Zygmund theory implies that $u \in W^{2,1}_{q, loc}(G)$ for all $1<q<\infty$, see for instance Proposition 7.14 in \cite{Li}. Then, we can invoke Theorem 4.29 in \cite{Li} to  conclude that   $v, u$ are in $H^{1+\alpha}(\overline{G\cap Q_{2}(0,0)})$. We note that Theorem 4.29 in \cite{Li} can be applied to strong  solutions  in $W^{2,1}_{n+1, loc}$ via approximations  by solutions to equations with smooth coefficients and by an application of the comparison principle Theorem 7.1 in \cite{Li}. We refer to \cite{GH} for the elliptic  counterpart of  such intermediate  Schauder  type regularity result.
\end{rmrk}
After a  suitable change of coordinates and parabolic dilation similar to \eqref{d1}, we may assume that
\begin{equation}\label{g2}
\begin{cases}
 A(0, 0) = I,
 \\ f(0)=0,\ \ \ D' f(0)=0,\ \ \ ||f||_{C^{1,\alpha}(G \cap Q_{2}(0,0))}\le \delta,
\\   
Du(0, 0)=e_n,\ \ \ \ ||u-x_n||_{H^{1+\alpha}(G \cap Q_{2}(0,0))} \le \delta,
\\
\max \bigg\{ [A]_{\alpha, G \cap Q_{2}(0,0)}, ||g||_{H^{\alpha}(G \cap Q_{2}(0,0))}, ||b, c||_{L^{\infty}(G \cap Q_{2}(0,0))}\bigg\} \leq \delta.
\end{cases}
\end{equation}
Here, $[A]_{\alpha}$ indicates the $\alpha$-H\"{o}lder seminorm of $[a_{ij}]$, see p. 46 in \cite{Li}. More precisely, first by a suitable change  of coordinates, we can  ensure that $A(0, 0)=I$. Then, by letting
\begin{equation}\label{d12}
 u_{r_0}(x, t)= \frac{ u(r_{0} x, r_{0}^{2} t)}{r_{0}}\ \ \ \ \ v_{r_0}(x, t)= \frac{ v(r_{0} x, r_{0}^{2} t)}{r_{0}},
\end{equation}
as in \eqref{d1}, we have that  $u_{r_0}, v_{r_0}$  solve in $G^{r_0}$ (same definition as in Section \ref{r1})
\[
L_0 u_{r_0} = 0,\ \ \ \ L_0 v_{r_0}= g_0,
\]
where
\[
L_0 w= \operatorname{Tr}( A_0 D^2 w) +  <b_0,D w> + c_0 w -  w_t,
\]
and
\[
A_{0}(x, t)= A(r_0 x, r_{0}^{2} t),\  b_{0}(x, t)= r_0 b(r_{0}x, r_{0}^{2}t),\ c_0=r_{0}^2 c(r_{0}x, r_{0}^{2}t),\ g_0(x,t)=r_0 g(r_{0}x, r_{0}^{2}t).
\]
Therefore, if $r_0$ is suitably chosen depending on $\delta$,  \eqref{g2} can be ensured. As before, by abuse of notation, we keep calling $u_{r_0}=u$, $G^{r_0}=G$ and so on. Moreover, as in Section \ref{r1}, $\delta$ will be determined later.

We  now  introduce the  relevant notion of approximating function with respect to  $L$ and $g$, where $g$ is as in Theorem \ref{t1}.

\begin{dfn}
 We say that $P(x) = a_0 + \sum_{i=1}^n a_i x_i$ is an approximating  affine function at $(0,0)$ for $\frac{v}{u}$ with respect to $L, g$ if  $2 a_n = g(0,0)$. 
\end{dfn}

With this  notion the corresponding statement  of Lemma \ref{imp1} remains the same,  but  its proof needs to be slightly modified.

\begin{lemma}\label{imp5}
Let $u,v$ be as in Theorem \ref{t1}. Assume that for  some $r \leq 1 $ and $P(x) = a_0 + \sum_{i=1}^n a_i x_i$ an  approximating affine function at $(0,0)$ for $\frac vu$ (with respect to $L$ and $g$) with $|a_i|\leq 1$, one has
\begin{equation}\label{i1b}
||v - uP||_{L^{\infty}(G \cap Q_{r}(0,0))} \leq r^{2+ \alpha}.
\end{equation}
Then, there exists an  approximating affine function $\tilde P$ such that for some $C, \rho > 0$ universal, we have  
\begin{equation}\label{pptildebis}
 ||P-\tilde P||_{L^{\infty}(G \cap Q_{r}(0,0))} \leq C r^{1+\alpha},
\end{equation}
and
\begin{equation}\label{vutildebis}
||v - u\tilde P||_{L^{\infty}(G \cap Q_{\rho r}(0,0))} \leq (\rho r)^{2+ \alpha}.
\end{equation}
\end{lemma}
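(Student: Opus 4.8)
The plan is to follow the proof of Lemma \ref{imp1} essentially verbatim; the only genuinely new point is identifying the equation satisfied by the blow-up function and checking that the relation $2a_n = g(0,0)$ defining an approximating function is exactly what makes its inhomogeneity be of size $O(\delta)$. With $\tilde G = \{(x/r,t/r^2):(x,t)\in G\}$ as in Lemma \ref{imp1}, define $\tilde v$ on $\tilde G$ by $v(x,t) = u(x,t)P(x) + r^{2+\alpha}\tilde v(x/r,t/r^2)$, set $\tilde u(x,t) = u(rx,r^2 t)/r$, and introduce the rescaled operator $\tilde L w = \operatorname{Tr}(\tilde A D^2 w) + <\tilde b, Dw> + \tilde c\,w - w_t$, with $\tilde A(y,s) = A(ry,r^2 s)$, $\tilde b(y,s) = r\,b(ry,r^2 s)$, $\tilde c(y,s) = r^2 c(ry,r^2 s)$. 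Since $P$ is affine and $Lu=0$, a direct computation using the symmetry of $A$ yields $L(uP) = 2\sum_{i,j}a_{ij}\,D_iu\,D_jP + u<b,DP>$; combining this with $Lv=g$ and the scaling relations, one obtains in $\tilde G\cap Q_1(0,0)$ that
\[
\tilde L\tilde v(y,s) = r^{-\alpha}\Big[\,g(ry,r^2 s) - 2\sum_{i,j}a_{ij}(ry,r^2 s)\,D_iu(ry,r^2 s)\,a_j - r\,\tilde u(y,s)<b(ry,r^2 s),DP>\,\Big].
\]
Now $A(0,0)=I$ and $D_iu(0,0)=\delta_{in}$ by \eqref{g2}, so $2\sum_{i,j}a_{ij}(0,0)D_iu(0,0)a_j = 2a_n = g(0,0)$, the last equality being the definition of approximating function. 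Writing $g(ry,r^2 s) = g(0,0) + [g(ry,r^2 s)-g(0,0)]$ and subtracting and adding $a_{ij}(0,0)D_iu(0,0)$ in the second term, the bracket splits into three pieces each $O(\delta r^\alpha)$ on $Q_1(0,0)$: the first by $[g]_\alpha\le\delta$; the second by $[A]_\alpha\le\delta$, the bound $[D_x u]_{H^\alpha}\le C\delta$ implied by $||u-x_n||_{H^{1+\alpha}}\le\delta$, and $|a_j|\le1$; the third by $|b|\le\delta$, $|DP|\le C$, $0<r\le1$ and the universal bound $||\tilde u||_{L^\infty(\tilde G\cap Q_1(0,0))}\le 2$ (again from $||u-x_n||_{H^{1+\alpha}}\le\delta$). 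Hence $|\tilde L\tilde v|\le C\delta$ in $\tilde G\cap Q_1(0,0)$; moreover \eqref{i1b} gives $||\tilde v||_{L^\infty(\tilde G\cap Q_1(0,0))}\le1$, and $\tilde v$ vanishes on $\partial \tilde G\cap Q_1(0,0)$.

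From here the argument proceeds exactly as in Lemma \ref{imp1}. Since $[A]_\alpha\le\delta$ and $A(0,0)=I$ force $||A-I||_{L^\infty(Q_2(0,0))}\le C\delta$, while $\tilde b\to0$, $\tilde c\to0$ and $||f_{r_0}||_{C^{1,\alpha}}\le\delta$, letting $\delta\to0$ along a subsequence and invoking the uniform interior $W^{2,1}_p$ estimates together with the boundary $H^{1+\alpha}$ estimate of Theorem 4.29 in \cite{Li} (valid for strong solutions as in the remark following Theorem \ref{t1}), we obtain $\tilde v\to v_0$ uniformly on compact sets, with $v_0$ solving \eqref{s}. Odd reflection of $v_0$ in $x_n$ and the weighted Taylor expansion at $(0,0)$ then produce, as in Lemma \ref{imp1}, an approximating affine function $Q_0(x) = \sum_{i=1}^{n-1}q_ix_i + q_0$ with $q_n=0$ (because the reflected function $V_0$ satisfies $D_{nn}V_0(0,0)=0$) and $|q_i|\le C_1$ universal, such that $x_nQ_0$ is caloric and $||v_0 - x_nQ_0||_{L^\infty(B_\rho^+(0)\times[-\rho^2,0])}\le\frac{1}{4}\rho^{2+\alpha}$ for a universal $\rho>0$. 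Choosing $\delta$ small gives $||\tilde v - v_0||_{L^\infty(\tilde G\cap Q_\rho(0,0))}\le\frac{1}{4}\rho^{2+\alpha}$, and then $||\tilde u - x_n||_{L^\infty(\tilde G\cap Q_\rho(0,0))}\le\delta$ together with $|q_i|\le C_1$ yields $||\tilde v - \tilde uQ_0||_{L^\infty(\tilde G\cap Q_\rho(0,0))}\le\rho^{2+\alpha}$. Setting $\tilde P(x) = P(x) + r^{1+\alpha}Q_0(x/r)$, the coefficient of $x_n$ in $\tilde P$ equals $a_n$ because $q_n=0$, hence $2\tilde a_n = 2a_n = g(0,0)$ and $\tilde P$ is again an approximating affine function for $\frac{v}{u}$ with respect to $L,g$. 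Rewriting $\tilde v,\tilde u$ in terms of $v,u$ as at the end of the proof of Lemma \ref{imp1} gives \eqref{pptildebis} and \eqref{vutildebis}.

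The main obstacle is the passage to the rescaled equation and the size estimate for its right-hand side: one must check that the identity $2\sum_{i,j}a_{ij}(0,0)D_iu(0,0)a_j = 2a_n$, made possible by the normalizations $A(0,0)=I$, $Du(0,0)=e_n$ in \eqref{g2}, reproduces the value $g(0,0)$ precisely when $P$ is approximating. This cancellation is the variable-coefficient, inhomogeneous counterpart of the fact used in Lemma \ref{imp1} that $x_nP$ is caloric iff $a_n=0$; without it the inhomogeneity of $\tilde v$ would be of order $r^{-\alpha}$ rather than $O(\delta)$ and the compactness step would break down. A secondary technical point, exactly as in the heat equation case, is ensuring that the interior and boundary regularity estimates used in the compactness argument are uniform in $\delta$, which is where one uses that the $H^\alpha$-norms of the coefficients of $\tilde L$ and the $C^{1,\alpha}$-norm of the boundary graph are controlled by $\delta$.
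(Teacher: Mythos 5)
Your proposal is correct and follows essentially the same structure as the paper's proof of Lemma \ref{imp5}: the identity $L(uP) = 2\langle A Du, DP\rangle + u\langle b, DP\rangle$, the cancellation $2\sum_{ij}a_{ij}(0,0)D_iu(0,0)a_j = 2a_n = g(0,0)$ coming from the normalizations in \eqref{g2} together with the definition of approximating function, the resulting bound $|\tilde L\tilde v|\le C\delta$, the blow-up limit $v_0$, the construction of $Q_0$ with $q_n=0$, and the verification that $\tilde P = P + r^{1+\alpha}Q_0(\cdot/r)$ remains approximating because $q_n=0$.

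The one place you diverge is the compactness step. The paper emphasizes that uniform interior $H^{2+\alpha}$ estimates are not available here (the lower-order coefficients $b,c$ are only $L^\infty$), so it combines the uniform boundary $H^{1+\alpha}$ estimates with the stability theorem for $L^p$-viscosity solutions (Theorem 6.1 of \cite{CKS}) to conclude directly that $v_0$ is a viscosity solution of the heat equation, and then invokes viscosity regularity. You instead appeal to uniform interior $W^{2,1}_p$ estimates together with the boundary $H^{1+\alpha}$ estimates, which is also valid: the $W^{2,1}_p$ constants are uniform because $[A]_\alpha\le\delta$, $\|A-I\|_{L^\infty}\le C\delta$ and $|b|,|c|\le\delta$, but to conclude that $v_0$ solves \eqref{s} you then need to pass to the limit in the equation via weak $W^{2,1}_p$ convergence of $D^2\tilde v$ and $\tilde v_t$ together with $\tilde A\to I$, $\tilde b,\tilde c\to 0$; this is routine but should be stated. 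The paper's viscosity-solution route packages exactly this passage to the limit into a single citation, which is why it is preferred there. Either way the remainder of the argument is unchanged, so your proof is sound.
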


\begin{proof}

We point out the essential modifications in the proof of  Lemma \ref{imp1}  in the present context.
 Let $\tilde v$ and $\tilde u$ be as in the proof of Lemma \ref{imp1}. Then, one has
\begin{equation}\label{g3}
g= Lv= L(uP) + r^{\alpha} \tilde L \tilde v (x/r, t/r^2),
\end{equation}
where 
\begin{equation}\label{g10}
\tilde L \tilde v = Tr(\tilde A D^2 \tilde v) + r <\tilde b,D \tilde v> + r^2 \tilde c \tilde v - \tilde v_t,
\end{equation}
and
\begin{equation}\label{g11}
\tilde A= A(rx, r^2 t),\ \tilde b= b(rx, r^2 t),\  \tilde c= c(rx, r^2 t),\  \ \ (x,t) \in \tilde G.
\end{equation}

Since $Lu=0$, one has
\[
L(uP)= 2 <A Du,DP>  + u <b,DP>.
\]

From \eqref{g2} we have for all $(x,t) \in G \cap Q_{r}(0,0)$
\begin{equation}\label{g50}
\begin{cases}
A(x,t)= I + \delta r^{\alpha} M(x,t),
\\   
Du(x,t)= e_n +\delta  r^{\alpha} w(x, t), 
\\  g(x,t)= g(0,0)  + \delta r^{\alpha} w_1(x, t), 
\\
|u(x,t)| \leq C  \delta r,
\end{cases}
\end{equation}
where $M,w, w_1$ are bounded.
Therefore, using \eqref{g50} and \eqref{g2} and the fact that $P$ is approximating for $\frac{v}{u}$, we obtain for some bounded function $K(x,t)$,
\[
|L(uP)-g|= | 2<ADu, DP> + u<b,DP> - g|=|g(0,0) - g(x,t) + \delta r^{\alpha} K(x,t)| \leq  K_1 \delta r^{\alpha}.
\]
Combined with \eqref{g3} this estimate gives
\[
|\tilde L  \tilde v| \leq C \delta\ \ \ \text{in}\  \ \ \tilde G \cap Q_{1}(0,0).
\]
We also note that,  with $\tilde u$ as in \eqref{e2}, we have
\[
\tilde L \tilde u=0.
\]
Letting  $\delta \to 0$, from \eqref{g2} we obtain that  up to a subsequence $\delta \to 0$, $\tilde v = \tilde v(\delta) \to  v_{0}$, which solves \eqref{s}. Note that, unlike the case of the heat equation, we do not presently have uniform interior $H^{2+ \alpha}$ estimates for $\tilde v$. Nevertheless, because of $H^{1+\alpha}$ estimates for $\tilde  v= \tilde v(\delta)$ up to $\partial_{p} \tilde G \cap Q_{1}(0,0)$ independent of $\delta$, by applying Theorem 6.1 in \cite{CKS} we can ensure that $v_{0}$ is a $L^{p}$ viscosity solution of the heat equation in the sense of \cite{CKS}. The regularity theory  for viscosity solutions now ensures that $v_{0}$ is a classical solution of the heat equation. As in the proof of Lemma \ref{imp1} we have that  \eqref{e4}-\eqref{e7} holds for $Q_0(x)= \sum_{i=1}^n q_i x_i + q_0$, with $q_n=0$. Then, as in the case of heat equation the conclusion of the lemma follows  with $\tilde P= P + r^{1+\alpha} Q_{0}(\frac{.}{r})$. Note that, if we let $P(x)= \sum_{i=1}^n a_i x_i + a_0$ and $\tilde P(x)= \sum_{i=1}^n \tilde a_i x_i + \tilde a_0$, then since $P$ is an approximating affine function for $L$ and $g$ we have $a_n= \frac{g(0,0)}{2}$. Since $q_n=0$, we have
\[
\tilde a_n= a_n + r^{\alpha}q_n= \frac{g(0,0)}{2}.
\]
This shows that also $\tilde P$ is an approximating affine function at $(0,0)$ for $\frac vu$ with respect to $L$ and $g$. From this fact, the verification of \eqref{vutildebis} above follows as that of \eqref{vutilde} in Lemma \ref{imp1}.

\end{proof}

\begin{proof}[Proof of Theorem \ref{t1}] We repeatedly apply Lemma \ref{imp5}. To start the process we  first take  $P(x)= \frac{g(0,0)}{2}x_n$. Then, \eqref{i1b} holds for some  universal $r=r_{0}$ when $v, g$ and $P$  are  multiplied by suitable constants. As  before, by iterating  Lemma \ref{imp5} with $ r=r_0, \rho r_0, \rho^2 r_0$ and so on, we  obtain a  limiting affine function $P_0$ such that \eqref{tilde1} holds. We note that in this case, $P_0$ has the following explicit  representation
\begin{equation}
P_0(x)= P(x) + \sum_{i=1}^{\infty} (\rho^{i-1}r_0)^{1+\alpha} Q_{i}(\frac{x}{\rho^{i-1}r_0})
\end{equation}
where $P(x)= \frac{g(0,0)}{2}x_n$, and $Q_i(x)$ is the affine  function obtained in the $i$-th iteration of Lemma \ref{imp5}. The rest of  the proof remains the   same as that for the heat equation.

\end{proof}

\subsection{\textbf{$H^{k+\alpha}$ regularity for $k \geq 2$}}\label{SS:kbigger2}

In what follows the assumptions on $G$ are as in Section \ref{s2}. We  have the following  higher-regularity result for variable coefficient operators. We note that in the next result we do not assume that $u$ and  $v$  are  strong solutions as in Theorem 4.1 since by the regularity  theory one infers that both $u$ and $v$ are classical solutions.

\begin{thrm}\label{t4}
Let $u$ and $v$ be (classical) solutions in $G \cap Q_{2}(0,0)$ of the equations
\begin{equation}\label{g100}
Lu = \operatorname{Tr}(A D^2 u) + <b,Du> + cu - u_t = 0,
\end{equation}
and
\begin{equation}
Lv=g,
\end{equation} 
where $A, g \in H^{ k-1 + \alpha}(\overline{G \cap Q_{2}(0,0)})$, $ b, c \in H^{k-2+ \alpha}(\overline{G \cap Q_{2}(0,0)})$. Assume that  $u, v$  vanish  on $\partial_{p} G \cap Q_{2}(0,0)$. Also, let $u>0$ in $G \cap Q_{2}(0,0)$, and assume furthermore  that it satisfy the normalization condition \eqref{assump}. Then, one has 
\begin{equation}
||\frac{v}{u}||_{H^{k+\alpha}(G \cap Q_{1}(0, 0))} \leq C (||v||_{L^{\infty}(G \cap Q_{2}(0,0))} + ||g||_{H^{k-1+ \alpha}(G \cap Q_{2}(0,0))} + 1),
\end{equation}
for some $C>0$ universal.
\end{thrm}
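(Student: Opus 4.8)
The plan is to follow the same scheme as in the proof of Theorems \ref{main1}, \ref{t10} and \ref{t1}, namely an iterative compactness argument based on an analogue of Lemma \ref{imp2}, but with the key modifications already introduced in Section \ref{SS:k=1}: since $Lv = g$ has a nonzero right-hand side, the notion of ``approximating parabolic polynomial of order $k$ for $\frac vu$'' must now be adapted so that $L(uP)$ matches the Taylor polynomial of $g$ of weighted order $k-1$ rather than vanishing. First I would perform a preliminary change of coordinates and parabolic dilation as in \eqref{d12}, so that, in analogy with \eqref{g2} and \eqref{h1}, one has $A(0,0)=I$, $Du(0,0)=e_n$, $\|u-x_n\|_{H^{k+\alpha}(G\cap Q_2)}\le\delta$, $\|f\|_{H^{k+\alpha}}\le\delta$, and all of $[A]_{k-1+\alpha}$, $\|b,c\|_{H^{k-2+\alpha}}$, $\|g-P_g\|$ (where $P_g$ is the weighted Taylor polynomial of $g$ of degree $k-1$) controlled by $\delta$. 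The Schauder theory (Chapters 4–5 of \cite{Li}, together with the Calder\'on--Zygmund/intermediate Schauder regularity already invoked after Theorem \ref{t1}) guarantees $u,v\in H^{k+\alpha}$ up to $\overline{G\cap Q_2(0,0)}$ to begin the scheme.

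Next I would establish the variable-coefficient analogue of the algebraic lemma underlying \eqref{e20}--\eqref{f5}: writing $u=x_n+P_1+z$ with $P_1$ of weighted degree $\ge 2$ and $z$ of weighted order $k+\alpha$, and expanding $L(uP)=2\langle A\,Du,DP\rangle+u(\operatorname{Tr}(AD^2P)+\langle b,DP\rangle+cP-P_t)$, one finds that the top-order part is the same constant-coefficient expression $\operatorname{Tr}(D^2(x_nP))-(x_nP)_t$ as in \eqref{e19}, while the perturbation from $A-I$, $b$, $c$, $P_1$ and $z$ contributes a parabolic polynomial of degree $\le k-1$ with coefficients $O(\delta)$ plus a remainder $w$ with $|w(x,t)|\le C\delta(|x|^2+|t|)^{\frac{k-1+\alpha}{2}}$. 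This gives $L(uP)=R(x,t)+w(x,t)$ with $R$ of degree $k-1$, and the same triangular linear system \eqref{f5} (now with an inhomogeneous target $d_{q,\kappa}$ coming from the Taylor coefficients of $g$) is solvable by the identical double-induction argument after \eqref{f5}, once we prescribe freely the coefficients $a_{m,\ell}$ with $m_n=0$. One then defines $P$ to be \emph{approximating of order $k$ for $\frac vu$ with respect to $L,g$} when $R$ equals $P_g$, the Taylor polynomial of $g$ of weighted degree $k-1$; for such $P$, $L(uP)-g=w$ with $w$ of weighted order $k-1+\alpha$.

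With this in hand I would prove the analogue of Lemma \ref{imp2}: assuming $\|v-uP\|_{L^\infty(G\cap Q_r)}\le r^{k+1+\alpha}$ for an approximating $P$, set $v(x,t)=u(x,t)P(x,t)+r^{k+1+\alpha}\tilde v(\frac xr,\frac t{r^2})$ and $\tilde u(x,t)=\frac{u(rx,r^2t)}{r}$; then from $Lv=g$, using that $P$ is approximating, one gets $r^{k-1+\alpha}\tilde L\tilde v(\frac xr,\frac t{r^2})=g-L(uP)=-w$ with $\tilde L$ as in \eqref{g10}-\eqref{g11}, so $|\tilde L\tilde v|\le C\delta$ on $\tilde G\cap Q_1(0,0)$ and $\|\tilde v\|_{L^\infty}\le 1$, with $\tilde v=0$ on $\{x_n=0\}$. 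Letting $\delta\to0$ along a subsequence, uniform $H^{k+\alpha}$ boundary estimates (and the $L^p$-viscosity-solution passage to the limit via Theorem 6.1 in \cite{CKS}, exactly as in the proof of Lemma \ref{imp5}, since interior $H^{k+1+\alpha}$ estimates are not available for variable coefficients) yield a caloric limit $v_0$ on $B_1^+\times(-1,0]$ vanishing on $\{x_n=0\}$; odd reflection and Lemma 1.1 in \cite{AV} produce $Q_0$ of degree $k$ with $x_nQ_0$ caloric and $\|v_0-x_nQ_0\|\le C\rho^{k+2}$. Then, exactly as in Lemma \ref{imp2}, one must correct $Q_0$ to $\tilde Q$ so that $\tilde P=P+r^{k+\alpha}\tilde Q(\frac \cdot r,\frac\cdot{r^2})$ is again approximating for $\frac vu$ with respect to $L,g$ — the coefficients of $\tilde Q-Q_0$ solve the triangular system \eqref{f12}-\eqref{f13} (with the rescaled $\tilde c^{m,\ell}_{q,\kappa}=r^{|q|+2\kappa+1-(|m|+2\ell)}c^{m,\ell}_{q,\kappa}$, bounded by $C\delta$ since $0<r<1$) and hence $\|\tilde Q-Q_0\|\le C\delta$, so the estimate on $\|\tilde v-\tilde u\tilde Q\|$ is preserved up to shrinking $\delta$. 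Finally, iterating this lemma starting from $P=\sum_{|m|+2\ell\le k}$ being the unique approximating polynomial of order $k$ whose ``$m_n=0$ part'' is the weighted Taylor polynomial of $\frac vu$ forced by $g$ (multiplying $v$, $g$ and $P$ by suitable constants so that the hypothesis holds for a universal $r_0$), one obtains a limiting polynomial $P_0$ with $\|v-uP_0\|_{L^\infty(G\cap Q_r)}\le Cr^{k+1+\alpha}$ for $r\le r_0$, whence pointwise $H^{k+\alpha}$ regularity of $\frac vu$ at $(0,0)$; combined with the interior-type bound on $Q_{r/4}(\frac r2e_n,0)$ obtained from \eqref{z25}-type identities and $|\tilde v|\le C\tilde u$, the conclusion follows as in the proof of Proposition 4.13 in \cite{CC}, now with the parabolic distance.

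The main obstacle is the same bookkeeping difficulty as in Lemma \ref{imp2}, compounded by the inhomogeneity: one must verify that the corrected polynomial $\tilde Q$ keeps $\tilde P$ in the (now affine, not linear) class of approximating polynomials associated with $L$ and $g$, and that the associated triangular system is genuinely solvable with $O(\delta)$ bounds even though the right-hand side data $d_{q,\kappa}$ carry the Taylor coefficients of $g$. The key structural point making this work — as emphasized after \eqref{f5} and in Remark \ref{R:ap} — is that time-differentiation does not raise weighted degree, so the system stays lower-triangular with respect to the order $|m|+2\ell$ once the $m_n=0$ coefficients are fixed; this remains true after adding the variable-coefficient perturbation terms because those only produce coefficients $c^{m,\ell}_{q,\kappa}$ with $|m|+2\ell\le|q|+2\kappa$ and of size $O(\delta)$. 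A secondary technical point is that, as in Section \ref{SS:k=1}, the limit in the compactness step must be taken in the $L^p$-viscosity framework of \cite{CKS} rather than via interior Schauder estimates, which is why only boundary $H^{k+\alpha}$ estimates are used uniformly in $\delta$.
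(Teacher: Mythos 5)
Your proposal follows the paper's argument for Theorem \ref{t4} essentially step by step: the same normalization \eqref{h200}, the same expansion $L(uP)=R+w$ leading to the definition of an approximating polynomial as one for which $R$ matches the degree-$(k-1)$ Taylor polynomial $P_g$ of $g$, the same compactness lemma (the paper's Lemma \ref{imp6}) with the correction $Q_0\mapsto\tilde Q$ via the triangular system \eqref{f12}, and the same iteration and conclusion via \eqref{z25} and Proposition 4.13 of \cite{CC}. The only (harmless) deviation is that you invoke the $L^p$-viscosity passage of \cite{CKS} in the compactness step as in Lemma \ref{imp5}, whereas for $k\ge 2$ the paper notes that $u,v$ are already classical solutions (the coefficients being $H^{k-1+\alpha}$ with $k\ge 2$), so that detour is not actually needed here.
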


As  before,  from the Schauder theory as in Chapters 4 and 5 in  \cite{Li}, we have that $u, v \in H^{k+ \alpha}(\overline{G \cap Q_{3/2}(0,0)})$. By a suitable change of  coordinates and parabolic dilations similar to \eqref{d1}, we can assume that 
\begin{equation}\label{h200}
\begin{cases}
A(0, 0)= I,\ \ \ \   ||A-I||_{H^{k-1+\alpha}(G \cap Q_{2}(0,0))} \leq \delta,
\\
f(0)=0,\ \ \ D' f(0)=0,\ \ \  ||f||_{H^{k +\alpha}(G \cap Q_{2}(0,0))}\le \delta,
\\
Du(0, 0)=e_n, \ \  \ ||u-x_n||_{H^{k+\alpha}(G \cap Q_{2}(0,0))}\leq  \delta,
\\
\max \bigg\{|| g||_{H^{k-1+\alpha}(G \cap Q_{2}(0,0))}, ||b, c||_{H^{k-2+\alpha}((G \cap Q_{2}(0,0))} \bigg\} \leq  \delta,
\end{cases}
\end{equation}
where $0<\delta<1$ is to be chosen appropriately later.

Similarly to what was done in the proof of Theorem \ref{t10} above, we need to compute $L(uP)$, where $P(x,t) = \sum_{0\le |m|+2\ell\le k} a_{m,\ell}x^m t^{\ell}$ is a parabolic polynomial of degree $k$. Since by \eqref{g100} we have $Lu=0$, we obtain 
\begin{equation}\label{h201}
L(uP)=  u LP + 2 <A Du,DP>  + <b,DP>.
\end{equation}
By the linearity of $L$ we are thus led to understand \eqref{h201} when $P(x,t) = x^{m} t^{\ell}$. In such case, from \eqref{h200} it follows that 
\begin{align}\label{h102}
L(uP) = &\  m_n (m_{n}+1)x^{m-\overline{n}} t^{\ell} + \sum_{i \neq n} m_i (m_i - 1) x^{m - 2 \overline{i} + \overline{n}} t^{\ell}  - \ell x^{m+ \overline{n}} t^{\ell-1}
\\
 + & \sum_{|m| + 2 \ell \leq |q|+2\kappa\le k-1} c^{m , \ell}_{q, \kappa} x^q t^\kappa+  w_{m, \ell},
\notag
\end{align}
where because of  the representation \eqref{h100} which is valid for $u$,   \eqref{h200} and  \eqref{h201}, we have that
\begin{equation}\label{h5}
| c^{m, \ell} _{q, \kappa}| \leq C\delta, |w_{m , \ell}| \leq C \delta ( (|x|^2 + |t|)^{\frac{k-1+ \alpha}{2}}), \ \text{and}\ ||w_{m,\ell}||_{H^{k-2 + \alpha} (G \cap Q_r)}  \leq C \delta r.
\end{equation}
In \eqref{h102}, we used the fact that although the term $u<b,DP> \in H^{k-2+\alpha}(G \cap Q_{2}(0,0))$, but nevertheless  \eqref{h102} and \eqref{h5} can be justified as follows.  We first  note (see also \eqref{uxn}
 above), that because  of \eqref{h200}, we have
\begin{equation}\label{int1}
 u(x,t)=x_n +  P_1(x,t) + w_1(x,t),
\end{equation}
 where $P_1$ is a polynomial such that  $2 \leq \operatorname{deg}(P_1) \leq k$  and 
\begin{equation*}
||P_1|| \leq C \delta,\ \ \ \ \ |w_{1}(x,t)| \leq C \delta (|x|^2 + |t|)^{\frac{k+\alpha}{2}}.
\end{equation*}
 We furthermore  note that, since $b \in H^{k-2+\alpha}(\overline{G \cap Q_2(0,0)})$, we can write
\begin{equation}\label{int2}
b(x,t)= P_b(x,t) +  b_1(x,t),
\end{equation}
 where  $P_b$ is a  vector field  in $\Rn$ each of whose  components are  polynomials of degree at most $k-2$. Moreover, because of \eqref{h200} the following holds 
\begin{equation*}
||P_b|| \leq C \delta,\ \ \ \ \ |b_{1}(x,t)| \leq C \delta (|x|^2 + |t|)^{\frac{k -2 +\alpha}{2}}.
\end{equation*}
Therefore, from \eqref{int1} and \eqref{int2} it follows that 
\begin{equation*}
u<b, DP>= P_{u,b}(x,t) + w_{u,b}(x,t),
\end{equation*}
where $P_{u,b}(x,t)$ is a polynomial of degree  at most $k-1$ such that
\begin{equation*}
||P_{u,b}|| \leq C \delta,\ |w_{u,b}(x,t)| \leq C \delta (|x|^2 + |t|)^{\frac{k -1+\alpha}{2}}.
\end{equation*}

We also observe that in \eqref{h5} we have that  $c^{m , \ell}_{q, \kappa} \neq 0$ only when $|m| + 2 \ell \leq |q| + 2 \kappa \leq k-1$, and that, furthermore, the coefficient does depend on $A, u$ and $b$. 
Now  for  a  general polynomial of the  form $\sum_{0\le |m|+2\ell\le k} a_{m,\ell} x^m t^{\ell}$ one has
\begin{equation}\label{h2}
L(uP)=  R(x,t) + \sum a_{m,\ell} w_{m,\ell}(x,t),
\end{equation}
where
\begin{equation}\label{RR}
R(x,t)=\sum_{0\le |q|+2\kappa \le k-1} d_{q, \kappa} x^{q} t^{\kappa},
\end{equation}
and  the coefficients $a_{m , \ell}$ and $d_{q, \kappa}$ of $P$, and $c^{m,\ell}_{q, \kappa}$  satisfy \eqref{f5} as in the case of heat equation.

W e next introduce the appropriate notion of approximating polynomial in the present context.

\begin{dfn}
We  say that a parabolic polynomial $P$ of degree $\leq k$ is approximating of order $k$ at $(0,0)$ for $\frac{v}{u}$  with respect to $L$ and $g$ if the coefficients $d_{\ell, m}$ of $R(x,t)$ in the representation \eqref{h2}, \eqref{RR} above coincide with the coefficients of the Taylor polynomial of order  $k-1$ for $g$ at $(0,0)$. 
\end{dfn}

 With this notion in place,  we now  state the  analogue of Lemma \ref{imp2}.

\begin{lemma}\label{imp6}
Let $u,v$ be as in Theorem \ref{t4}. Assume that for  some $r \leq 1 $ and $P$ an  approximating polynomial of order $k$ for $\frac{v}{u}$ at $(0,0)$ with respect to $L$ and $g$,  with $||P||\leq 1$, one has
\begin{equation}\label{i1bis}
||v - uP||_{L^{\infty}(G \cap Q_{r}(0,0))} \leq r^{k+1+ \alpha}.
\end{equation}
Then, there exists an  approximating polynomial  $\tilde P$ of order $k$ such that for some $C, \rho > 0$ universal, we have  
\begin{equation}\label{pptildebis}
 ||P-\tilde P||_{L^{\infty}(G \cap Q_{r}(0,0))} \leq C r^{k+\alpha},
\end{equation}
and
\begin{equation}\label{vutildebis}
||v - u\tilde P||_{L^{\infty}(G \cap Q_{\rho r}(0,0))} \leq (\rho r)^{k +1+ \alpha}.
\end{equation}
\end{lemma}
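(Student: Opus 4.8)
The plan is to mimic the proof of Lemma~\ref{imp2}, grafting onto it the modifications from Lemma~\ref{imp5} that handle the variable coefficients and the nonzero right-hand side $g$. With $\tilde G$ as in the proof of Lemma~\ref{imp1}, define $\tilde v$ on $\tilde G$ by
\[
v(x,t)=u(x,t)P(x,t)+r^{k+1+\alpha}\,\tilde v\Big(\tfrac{x}{r},\tfrac{t}{r^{2}}\Big),
\]
and set $\tilde u(x,t)=u(rx,r^{2}t)/r$, which solves $\tilde L\tilde u=0$, where $\tilde L$ is the rescaled operator with coefficients $\tilde A=A(rx,r^{2}t)$, $r\tilde b$, $r^{2}\tilde c$ as in \eqref{g10}--\eqref{g11}. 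Since $Lv=g$ and, by \eqref{h2}--\eqref{RR} together with the hypothesis that $P$ is approximating of order $k$ for $v/u$ with respect to $L$ and $g$, one has $L(uP)=R+\sum a_{m,\ell}w_{m,\ell}$ with $R$ equal to the Taylor polynomial of order $k-1$ of $g$ at $(0,0)$, we obtain
\[
r^{k-1+\alpha}\,\tilde L\tilde v\Big(\tfrac{x}{r},\tfrac{t}{r^{2}}\Big)=g(x,t)-R(x,t)-\sum_{m,\ell}a_{m,\ell}w_{m,\ell}(x,t).
\]
Because $g\in H^{k-1+\alpha}$ with $\|g\|_{H^{k-1+\alpha}}\le\delta$ (see \eqref{h200}), the difference $g-R$ is the corresponding Taylor remainder, hence $O\big(\delta(|x|^{2}+|t|)^{(k-1+\alpha)/2}\big)$; combined with the bounds \eqref{h5} on $w_{m,\ell}$, rescaling to $\tilde G\cap Q_{1}(0,0)$ and dividing by $r^{k-1+\alpha}$ gives $\tilde L\tilde v=h$ with $|h|\le C\delta$ and $h\in H^{k-2+\alpha}(\tilde G\cap Q_{1}(0,0))$. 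From \eqref{i1bis} we get $\|\tilde v\|_{L^{\infty}(\tilde G\cap Q_{1}(0,0))}\le1$, and $\tilde v$ vanishes on $\partial\tilde G\cap Q_{1}(0,0)$.

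Next, let $\delta\to0$ along a subsequence. As in Lemma~\ref{imp2}, the compactness is supplied by uniform interior Schauder estimates for $\tilde L$ (its rescaled coefficients lie in $H^{k-1+\alpha}$ with uniformly small norm) together with the uniform boundary $H^{1+\alpha}$ estimates of Theorem~4.29 in \cite{Li}, both independent of $\delta$; if this turns out to be delicate, the passage to the limit can be justified exactly as in Lemma~\ref{imp5} via the $L^{p}$-viscosity stability of \cite{CKS}. In either case $\tilde v=\tilde v(\delta)\to v_{0}$ locally uniformly, and since $\tilde A\to I$, $r\tilde b\to0$, $r^{2}\tilde c\to0$ and $h\to0$, the limit $v_{0}$ is a classical solution of the heat equation satisfying the three conditions in \eqref{s1}. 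Then, exactly as in Lemma~\ref{imp2} --- by odd reflection across $\{x_{n}=0\}$ and Lemma~1.1 in \cite{AV} --- we produce a parabolic polynomial $Q_{0}$ of degree $k$ with $x_{n}Q_{0}$ caloric and
\[
\|v_{0}-x_{n}Q_{0}\|_{L^{\infty}(B_{\rho}^{+}\times(-\rho^{2},0])}\le C\rho^{k+2}\le\tfrac14\rho^{k+1+\alpha}
\]
for a universal $\rho>0$, while by compactness $\|\tilde v-v_{0}\|_{L^{\infty}(\tilde G\cap Q_{\rho}(0,0))}\le\tfrac14\rho^{k+1+\alpha}$ for $\delta$ small.

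The one genuinely new point is that $P+r^{k+\alpha}Q_{0}(\cdot/r,\cdot/r^{2})$ need not be approximating for $v/u$ with respect to $L$ and $g$, so $Q_{0}$ must be corrected to a polynomial $\tilde Q$ with $\|\tilde Q-Q_{0}\|_{L^{\infty}(Q_{1}(0,0))}\le C\delta$. This is carried out as in Lemma~\ref{imp2}: imposing that $\tilde P:=P+r^{k+\alpha}\tilde Q(\cdot/r,\cdot/r^{2})$ satisfy the relations \eqref{f5} with the \emph{same} right-hand side (the Taylor coefficients of $g$) that $P$ satisfies, and subtracting the relations for $P$, the $g$-coefficients cancel and one is left with precisely the system \eqref{f12}, with $\tilde c^{m,\ell}_{q,\kappa}=r^{|q|+2\kappa+1-(|m|+2\ell)}c^{m,\ell}_{q,\kappa}$; by \eqref{h5}, $c^{m,\ell}_{q,\kappa}\ne0$ only for $|m|+2\ell\le|q|+2\kappa\le k-1$ and $|c^{m,\ell}_{q,\kappa}|\le C\delta$, so $|\tilde c^{m,\ell}_{q,\kappa}|\le r|c^{m,\ell}_{q,\kappa}|\le C\delta$. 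Subtracting \eqref{f13} (valid for the coefficients of $Q_{0}$ since $x_{n}Q_{0}$ is caloric), the coefficients of $\tilde Q-Q_{0}$ solve a linear system with right-hand side $O(\delta)$ and with the triangular structure discussed after \eqref{f5}: setting the coefficients of $\tilde Q$ equal to those of $Q_{0}$ when $m_{n}=0$ and solving by induction on $|m|+2\ell$ yields $\|\tilde Q-Q_{0}\|_{L^{\infty}(Q_{1}(0,0))}\le C\delta$, hence $\|\tilde Q\|_{L^{\infty}(\tilde G\cap Q_{\rho}(0,0))}\le C$ universal.

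To conclude, the triangle inequality gives, for $\delta$ small, $\|\tilde v-x_{n}\tilde Q\|_{L^{\infty}(\tilde G\cap Q_{\rho}(0,0))}\le\tfrac12\rho^{k+1+\alpha}+C\delta\rho\le\tfrac34\rho^{k+1+\alpha}$, and then, using $\|\tilde u-x_{n}\|_{L^{\infty}(\tilde G\cap Q_{\rho}(0,0))}\le C\delta$ (a consequence, after rescaling, of the third line of \eqref{h200}, since the polynomial part of $u-x_{n}$ has degree $\ge2$) together with $\|\tilde Q\|\le C$, we get $\|\tilde v-\tilde u\tilde Q\|_{L^{\infty}(\tilde G\cap Q_{\rho}(0,0))}\le\tfrac34\rho^{k+1+\alpha}+C\delta\le\rho^{k+1+\alpha}$. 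Setting $\tilde P(x,t)=P(x,t)+r^{k+\alpha}\tilde Q(x/r,t/r^{2})$ and rewriting $\tilde v,\tilde u$ in terms of $v,u$, this is \eqref{vutildebis}, while \eqref{pptildebis} follows from $\|\tilde Q\|\le C$ as in Lemma~\ref{imp2}. I expect the main obstacle to be the compactness step --- certifying that the $\delta\to0$ limit of the variable-coefficient problems is caloric --- which for $k\ge2$ should be handled by the Schauder estimates above and, failing that, by the $L^{p}$-viscosity argument of Lemma~\ref{imp5}; the correction of $Q_{0}$ to $\tilde Q$, though the conceptually new ingredient relative to the heat equation, reduces identically to the computation in the proof of Lemma~\ref{imp2} once one notices that the Taylor coefficients of $g$ drop out upon subtraction.
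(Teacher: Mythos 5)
Your proof follows essentially the same approach as the paper's: rescale to $\tilde v$, use the fact that $P$ approximating forces the polynomial part $R$ of $L(uP)$ to equal the Taylor polynomial $P_g$ of $g$, combine this with the Taylor remainder bound and \eqref{h5} to get $|\tilde L\tilde v|\le C\delta$, pass to the limit $\delta\to0$ to obtain the caloric $v_0$, extract $Q_0$ by odd reflection, and finally correct $Q_0$ to $\tilde Q$ by solving the linear system \eqref{f12} (in which the Taylor coefficients of $g$ cancel upon subtraction, leaving the same triangular structure as in Lemma~\ref{imp2}). The only place you add detail beyond the paper is the compactness step, where you note both the Schauder route and the $L^p$-viscosity fallback of Lemma~\ref{imp5}; the paper is terse here but this is consistent with its treatment.
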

\begin{proof}
 The proof of Lemma \ref{imp6} follows  by arguing as in that of Lemma \ref{imp2}. We define
\begin{equation*}
v= uP  + r^{k+1+\alpha} \tilde v(\frac{.}{r},\frac{.}{r^2}),
\end{equation*}
where $P$ satisfies the hypothesis of the lemma. Since $g \in H^{ k-1 + \alpha}(\overline{G \cap Q_{2}(0,0)})$,  from the bounds in \eqref{h5} we have for all $(x,t) \in G \cap Q_{r}(0,0)$,
\begin{equation}\label{h101}
|g(x,t) - P_{g}(x,t)| \leq C \delta (|x|^2+ |t|)^{\frac{k-1+\alpha}{2}},
\end{equation}
where $P_g$ is a  polynomial of degree at most $k-1$.

 We note that  since   $P$ is approximating for $\frac{v}{u}$ at $(0,0)$ with respect to $L$ and $g$, we  have that $P_{g}(x,t)= R(x,t)$. Therefore, from \eqref{h102} and the bounds in \eqref{h5} we obtain in $G \cap  Q_{r}(0,0)$
\begin{equation}\label{bd}
|r^{k-1+\alpha} \tilde L  \tilde v(\frac{x}{r}, \frac{t}{r^2})| =|L(uP)- g| \leq  C \delta  r^{k-1+\alpha},
\end{equation}
where $\tilde L$ is as in \eqref{g10}. The estimate \eqref{bd} implies 
\begin{equation}\label{bd1}
|\tilde  L \tilde v| \leq C \delta,
\end{equation}
where  $\tilde L$  is as in  \eqref{g10}. As a consequence, for a subsequence $\delta \to 0$ we have $\tilde v = \tilde v(\delta) \to v_0$, where  $v_0$ is as in  \eqref{s1}. Now, similarly to the proof of Lemma \ref{imp2} there exists $Q_0$ such that \eqref{z10bis}-\eqref{f11} holds. Moreover, as in the case of heat equation, the polynomial $P(x,t) + r^{k+\alpha}Q_0(\frac{x}{r}, \frac{t}{r^2})$ need not be  approximating for $\frac{v}{u}$  with respect to $L$ and $g$. Therefore as before, we modify $Q_0$ to $\tilde Q$ such that $P(x,t) + r^{k+\alpha}\tilde Q(\frac{x}{r}, \frac{t}{r^2})$  is an approximating polynomial of order $k$ for $\frac{v}{u}$ at $(0,0)$ with respect to $L$ and $g$. Since $P$ is already  an approximating polynomial for $\frac{v}{u}$ the coefficients of $\tilde Q$ should satisfy \eqref{f12} similarly to the situation of the heat equation. The only difference in the present case being that in the analogue of \eqref{f12} the coefficients $\tilde c^{m, \ell}_{q, \kappa}$ would additionally depend on $A$ and $b$, besides $u$. The rest of the arguments remain the same as  in the proof of Lemma \ref{imp2},  and  the desired conclusion  follows.

\end{proof}

\begin{proof}[Proof of Theorem \ref{t4}] As previously, it follows by applying  Lemma \ref{imp6} repeatedly. In this case,  in order to start the process of iteration  we determine an approximating polynomial  $P= \sum_{m,\ell} a_{m,\ell} x^m t^\ell$ from  \eqref{f5} where $d_{q,\kappa}$'s  are  determined by the Taylor  polynomial $P_{g}(x,t)$ of order $k-1$ for $g$. In view of the procedure described after \eqref{f5}, such a  polynomial can be determined.  Then,  by  multiplying $v$ and $P$ by a  suitable constant,  the hypothesis of Lemma \ref{imp6}  holds for some small enough universal $r_{0}$. Therefore,  by applying the Lemma \ref{imp6} iteratively with $r=r_0, \rho r_0, \rho^2 r_0$ and so on, we  obtain a limiting polynomial $P_0$ which has the following representation
\begin{equation}
P_0(x,t) = P(x,t) + \sum_{i=1}^{\infty } (\rho^{i-1} r_0)^{k+\alpha} \tilde Q_{i}(\frac{x}{\rho^{i-1} r_0}, \frac{t}{(\rho^{i-1} r_0)^2}),
\end{equation}
where $P$ is the above polynomial which is determined before  the first step of the iteration and $\tilde Q_i$ are the polynomials determined after the $i$-th application of Lemma \ref{imp6}. Moreover,  with such a  $P_0$, we have  that \eqref{interm1} holds. The rest of the proof remains the same as that for the heat equation.

\end{proof}

\section{Application to the parabolic obstacle  problem}\label{S:pob}

As mentioned in the introduction, we  close the paper  with an application of Theorem \ref{t10} to the parabolic  obstacle. For a measurable set $E\subset \R^{n+1}$, we indicate with $\chi_{\Om}$ its indicator function. We consider  the following  problem studied in \cite{CPS}.

\begin{prob}\label{P:pb}
Given a domain $D\subset \Rn \times \R$, consider a function $u(x,t)$ defined in $D$ such that $u, Du$ are  continuous, and define the \emph{coincidence set} as 
\begin{equation*}
\Lambda=\{(x,t) \in D \mid u(x,t)=Du(x,t)=0\}.
\end{equation*}
With $\Om= D \setminus \Lambda$, suppose that $u$ solves the equation
\begin{equation*}
\Delta u -u_t= \chi_{\Om}.
\end{equation*}
The  \emph{free  boundary}  is defined as
\begin{equation*}
\Gamma= \Gamma(u)= \partial \Om \cap D.
\end{equation*}

\end{prob}

With this  setup, we  mention a corollary of Theorem \ref{t10}.
\begin{cor}\label{fb10}
Let $(x_0,t_0)\in \Gamma$ be a point as in Theorem 13.1 and Lemma 13.3 in \cite{CPS}. Then, $\partial \Om \cap Q_{1/4}(x_0, t_0)$ is  $C^\infty$.
\end{cor}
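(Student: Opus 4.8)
The plan is to run the bootstrap of De~Silva--Savin (Corollary 1.2 in \cite{DS1}), feeding in Theorem \ref{t10} at each step. First I would recall from \cite{CPS} (Theorem 13.1 and Lemma 13.3, together with the space-time $C^{1,\alpha}$ regularity of the free boundary near regular points established in Theorem 14.1 there) the starting picture: after a parabolic rescaling, a translation sending $(x_0,t_0)$ to the origin, and a rotation in the space variables, there are $r>0$ and a parabolically $C^{1,\alpha}$ function $f$ with
\[
\Omega\cap Q_r(0,0)=\{(x,t)\in Q_r(0,0)\mid x_n>f(x',t)\},
\]
and this portion of $\partial\Omega$ is \emph{lateral} — precisely the hypothesis under which Theorem \ref{t10} applies, and which, as the examples following Theorem \ref{main1} show, cannot be removed. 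Moreover, the directional monotonicity at regular points from \cite{CPS} allows one to arrange $D_nu>0$ in $\Omega\cap Q_r(0,0)$. One is then exactly in the normalized situation of Sections \ref{r1}--\ref{s2}, with the role of the domain $G$ played by (a suitable extension of) $\Omega\cap Q_r(0,0)$, so that $\partial_pG\cap Q_2(0,0)\subset\Gamma$.

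Next I would produce the auxiliary caloric functions. Since $\Delta u-u_t=1$ in $\Omega$, each $D_iu$ ($i=1,\dots,n$) and $u_t$ is caloric in $\Omega\cap Q_r(0,0)$, and each is continuous up to $\Gamma$ and vanishes there: $D_iu=0$ on $\Gamma$ since $Du\equiv0$ on $\Lambda$, while $u_t$ is continuous across $\Gamma$ and vanishes on $\Gamma$ (this follows from the blow-up classification at regular points in \cite{CPS}, the model solution being $t$-independent; equivalently, $u_t=\Delta u-\chi_\Omega$ with $\Delta u$ and $\chi_\Omega$ jumping by the same amount across $\Gamma$). Thus Theorem \ref{t10} applies to the ratios $D_iu/D_nu$ and $u_t/D_nu$, the positive denominator $D_nu$ playing the role of $u$ in the theorem (the normalization \eqref{assump} is achieved by a constant multiple). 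To convert regularity of these ratios into regularity of $f$ I would use the boundary identities on $\Gamma$: since $Du\equiv0$ on $\Gamma$, all derivatives of $Du$ tangential to $\Gamma$ vanish, so $D^2_xu=c\,\nu\otimes\nu$ on $\Gamma$ with $\nu$ the inward spatial normal; taking traces and using $u_t\to0$ and $\operatorname{Tr}(D^2_xu)-u_t=1$ gives $c=1$. Expanding $D_iu$, $D_nu$, $u_t$ to first order off $\Gamma$ in the normal direction (and, for $u_t$, differentiating $Du\equiv0$ along the time-like tangent of $\Gamma$) then yields, on $\Gamma$,
\[
\frac{D_iu}{D_nu}=\frac{\nu_i}{\nu_n}=-\partial_{x_i}f\quad(i=1,\dots,n-1),\qquad \frac{u_t}{D_nu}=-\partial_tf .
\]

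Finally I would iterate. Starting from $f\in H^{1+\alpha}$, Theorem \ref{main1} gives $D_iu/D_nu,\,u_t/D_nu\in H^{1+\alpha}$ up to $\Gamma$; restricting to the $H^{1+\alpha}$ graph and using the displayed identities yields $D'f,\partial_tf\in H^{1+\alpha}$ on the graph, hence $f\in H^{2+\alpha}$. Re-applying Theorem \ref{t10} with $k=2$, then $k=3,4,\dots$, one gains one order of regularity for $f$ at each step, so $f\in\bigcap_{k\ge1}H^{k+\alpha}$; thus $\Gamma$ is $C^\infty$ near the origin, and undoing the rescaling gives that $\partial\Omega\cap Q_{1/4}(x_0,t_0)$ is $C^\infty$. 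The main obstacle is the boundary analysis of the previous paragraph: one must know that $D_nu$ vanishes exactly to first order along $\Gamma$ so that the quotients really do have the asserted boundary values — this rests on $u\in C^{1,\alpha}$ up to $\Gamma$, the nondegeneracy $\partial_{\nu\nu}u=1$, and the Hopf-type lower bound $D_nu\ge c\operatorname{dist}(\,\cdot\,,\Gamma)$ — and one must track the parabolic scaling carefully in the step that turns ``$H^{k+\alpha}$ regularity of the quotients restricted to the graph'' into ``$H^{k+1+\alpha}$ regularity of $f$''.
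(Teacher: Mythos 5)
Your proposal is correct and follows essentially the same scheme as the paper: apply Theorem~\ref{t10} (via Theorem~\ref{main1} at the first step) to the caloric quotients $D_iu/D_nu$ and $u_t/D_nu$, identify their boundary traces with $-D_if$ and $-D_tf$, and bootstrap to gain one order of parabolic regularity for $f$ at each step. The only cosmetic difference is that you justify the boundary identities by a first-order normal expansion of $Du$ and $u_t$ off $\Gamma$ (and you have the sign right), whereas the paper differentiates $u(x',f(x',t),t)\equiv 0$ directly; both say the same thing once the continuity of the quotient up to $\Gamma$ from Theorem~\ref{t10} is used.
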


\begin{proof}
By the $C_{x}^{1,1}$  regularity of $u$ as  in Section 4 in \cite{CPS}, we have that  the spatial  derivatives  $D_i u$ vanish on $\partial \Om$ for all $i=1, ..n$. Moreover, Lemma 13.3 in \cite{CPS} implies that $D_t u$ also  vanishes continuously at the free  boundary  $\Gamma  \cap Q_{1/4}(x_0,t_0)$ near $(x_0,t_0)$. Now, Theorem 14.1  in \cite{CPS} implies that $Q_{1/4}(x_0,t_0) \cap  \Gamma$ is $C^{1,\alpha}$ regular which follows by an application of the boundary Harnack inequality as in \cite{ACS}.  Moreover, in the  proof  of  Theorem 14.1 in \cite{CPS} it is  evident that, without loss of generality, one can assume that  $\Gamma \cap Q_{\rho/4}(x_0,t_0)=\{(x,t) \mid x_n = f(x', t) \}$ and  that  the following holds, 
\begin{equation}\label{a10}
u_n( x_0+ \frac{3}{16}e_n, t_0 -(\rho/16)^2) \geq c_0,
\end{equation}
 for some $c_0, \rho >0$ universal. Moreover, $\rho$ can be chosen  in such a  way that the point  $(x_0+ \frac{3}{16}e_n,t_0 -(\rho/16)^2)$ is at a parabolic distance from $\Gamma$ bounded from below by a  universal constant $C_0$. Now
\begin{equation}\label{fb8}
u(x',f(x',t),t)=0.
\end{equation}
Therefore,  by differentiating the equation \eqref{fb8} with respect to the variables $x_1,...x_{n-1}, t$, we  obtain that
\begin{equation}\label{fb}
\frac{D_i u}{D_n u}= D_i f,\  \frac{D_t u}{D_n u}= D_t f.
\end{equation}
Since  \eqref{a10} is a  scaled version of  the normalization \eqref{assump}, this implies that if we take  $v= D_i u$ and $u=D_n u$ in Theorem \ref{main1}, we obtain from \eqref{fb} that $D'f \in H^{1+\alpha}$. Similarly, with $v=D_t u$ and $u=D_n u$, by application of Theorem \ref{main1} we find that $D_t f \in H^{1+\alpha}$. This implies that $f \in H^{2+\alpha}$, i.e.,  the free boundary is $H^{2+\alpha}$ regular. We now proceed  inductively as follows. Suppose we  know that $f$ and hence the free boundary is in $H^{k+\alpha}$ for some $k \geq 2$. Then, by applying Theorem \ref{t10} to $v=D_i u$ and $u=D_n u$, we obtain from \eqref{fb} that $D'f \in H^{k+\alpha}$. Similarly, with $v=D_t u$ and $u=D_n u$, we find that $D_t f \in H^{k+\alpha}$. This clearly  implies that $f \in H^{k+1+\alpha}$  and hence the free  boundary $\Gamma \cap Q_{1/4}(x_0,t_0)$ is $H^{k+1+\alpha}$. Therefore,  we can  repeatedly  apply Theorem \ref{t10} to conclude that $\Gamma \cap Q_{1/4}(x_0,t_0)$ is smooth.

\end{proof}

\begin{rmrk}
As mentioned before in the introduction, one can in fact establish space-like real analyticity of the free boundary by employing  the hodograph transform in \cite{CPS} (see Theorem 15.1 in \cite{CPS}). Nevertheless, similarly to the elliptic case, the proof of Corollary \ref{fb10} provides a new perspective in the study of parabolic free boundary  problems. It remains an interesting question to see  if one can establish an analogue of Theorem \ref{t10} and Corollary \ref{fb10} in the thin parabolic obstacle problem studied in \cite{DGPT} where the use of the hodograph transformation does not appear feasible.
\end{rmrk}

\end{document}